\def\r{{\rangle}}
\def\l{{\langle}}
\def\S{{\mathbb S}}
\def\R{{\mathbb R}}
\newtheorem{theorem}{Theorem}
\newtheorem{lemma}[theorem]{Lemma}
\newtheorem{corollary}[theorem]{Corollary}
\theoremstyle{definition}
\newtheorem{definition}[theorem]{Definition}
\theoremstyle{remark}
\newtheorem{remark}[theorem]{Remark}
\numberwithin{equation}{section}
\numberwithin{theorem}{section}
\numberwithin{problem}{section}
\begin{document}

\begin{abstract}
In this article we identify a sharp ill-posedness/well-posedness threshold for kinetic wave equations (KWE) derived from quasilinear Schr\"{o}dinger models. 

We show well-posedness using a collisional averaging estimate proved in our earlier work \cite{AmLe}. Ill-posedness manifests as instantaneous loss of smoothness for well-chosen initial data.

We also prove that both the gain-only and full equation share the same well-posedness threhold, thus legitimizing a gain-only approach to solving 4-wave kinetic equations.
\end{abstract}

\title{On the ill-posedness of kinetic wave equations}

\author[I. Ampatzoglou and T. L\'eger]{Ioakeim Ampatzoglou and Tristan L\'eger}

\address{Baruch College, The City University of New York, Newman Vertical Campus, 55 Lexington Ave New York, NY, 10010, USA
}
\email{ioakeim.ampatzoglou@baruch.cuny.edu}%

\address{Princeton University,  Mathematics  Department,  Fine Hall, Washington Road,  Princeton,  NJ 08544-1000,  USA}
\email{tleger@princeton.edu}

\maketitle

\tableofcontents

\section{Introduction}
\subsection{Context}
Kinetic equations are widely used to approximate complicated systems of many interacting agents (particles, waves,...) that cannot directly be simulated numerically due to their immense size. 
The field of derivation of such equations for weakly nonlinearly interacting waves has seen impressive advances recently: after a series of works on the rigorous derivation of the 4-wave kinetic wave equation (KWE) from the cubic nonlinear Schr\'odinger equation (NLS) see e.g. \cite{BGHS,CG1,CG2,DH1}, the state of the art result was obtained by Z. Hani and Y. Deng \cite{DH2}: they derived the equation up to the kinetic time and later showed validity of the (KWE) for as long as it is well-posed \cite{DH4}. Recently, the same authors along with X. Ma \cite{DHM} extended their program to interacting particles and provided a long time derivation of the Boltzmann equation, again subject to the condition of well-posedness. Besides the cubic (NLS), other  models such as KdV-type \cite{ST,Ma} or stochastic KP equations \cite{Faou} have also been considered  to derive 3-wave kinetic equations. Spatially inhomogeneous turbulence has also been studied and effective transport equations have been derived in  \cite{AmCoGer, HaRoStTr, HaShZh}.

However, despite the promising results in the field of derivation, the understanding of the kinetic equations themselves  remains poor. Yet this is a crucial step in the derivation program since all such results are conditional on existence of a smooth solution to the corresponding kinetic equation.

In this paper we seek to make some progress in this direction, and address local well-posedness for a family of kinetic wave equations derived from quasilinear Schr\"{o}dinger systems. Regarding the choice of the model, we are motivated by the fact that derivation of kinetic equations from quasilinear systems seems challenging. The results available are less convincing than for semilinear equations (meaning their validity is proved on much smaller time scales). A possible explanation is that kinetic theory does not apply as well to quasilinear equations, which would manifest as ill-posedness of formally derived corresponding kinetic approximations. The choice of the equations here allows us to correlate precisely the amount of derivative loss (measured by the parameter $\beta$, see \eqref{MMT}) with the local well-posedness of the kinetic wave equation.   

Our analysis identifies a sharp threshold of derivative loss for well-posedness of the equation to hold. That is, if the original equation loses more derivatives than that threshold, then its kinetic equation is ill-posed locally in time. Conversely if it loses less, then the kinetic equation is well-posed. Note that the original model of interacting waves appears to remain well-posed while the kinetic equation becomes ill-posed (see Remark \ref{MMT-well-posed} below). We provide possible explanations for the numerical value of the threshold, arguing both on mathematical and physical grounds.

A second insight provided by our analysis is that the threshold is the same for the full equation and its gain-only counterpart. Interestingly however, ill-posedness is achieved in different ways for both equations. Indeed the full equation has some cancellations that are not quite strong enough to restore well-posedness, but do dampen low frequencies.  

\subsection{The model}
\subsubsection{Definition}
Let $0\leq \beta \leq 1.$ The object of this article is the kinetic theory for systems of waves governed by the quasilinear equations
\begin{align} \label{MMT}
    i \partial_t u + \Delta u = \vert \nabla \vert^{\beta} \big(\vert \nabla \vert^\beta u \big \vert \vert \nabla \vert^{\beta} u \big \vert^2 \big)
\end{align}
for $u:(-T,T) \times \mathbb{T}^3 \rightarrow \mathbb{C},$ $T>0.$

The operator $\vert \nabla \vert^{\beta}$ is defined as 
\begin{align*}
\vert \nabla \vert^{\beta} e^{ik \cdot x} = \vert k \vert^{\beta} e^{ik \cdot x} .
\end{align*}
Written in $k$ space, the corresponding Hamiltonian is 
\begin{align} \label{Hamiltonian}
H = \sum_{k_1 \in \mathbb{Z}^3} \vert k_1 \vert^2 a_{k_1} a_{k_1}^{*} + \frac{1}{2} \sum_{k_1,k_2,k_3,k_4 \in \mathbb{Z}^3 } \big(\vert k_1 \vert \vert k_2 \vert \vert k_3 \vert \vert k_4 \vert \big)^\beta a_{k_1} a_{k_2} a_{k_3}^{*} a_{k_4}^{*}.
\end{align}
As mentioned above, the parameter $\beta$ quantifies the ``degree of quasilinearity" of the equation. The restriction $\beta \leq 1$ guarantees that the equation is indeed quasilinear.

From \eqref{Hamiltonian} it is then standard to derive (formally) the corresponding homogeneous kinetic wave equation:
\begin{align} \label{DKWE}
\begin{split}
    \partial_t f &= \mathcal{C}[f], \\ 
    \mathcal{C}[f] & := \vert k_1 \vert^{2\beta} \int_{\mathbb{R}^{9}} \big(\vert k_2 \vert \vert k_3 \vert \vert k_4 \vert \big)^{2\beta} f_1f_2f_3 f_4 \big(\frac{1}{f_1} + \frac{1}{f_2} - \frac{1}{f_3} - \frac{1}{f_4} \big)  \delta(\Sigma)   \delta(\Omega) \, dk_2 dk_3 dk_4, 
    \\
    \Sigma &:= k_1+k_2-k_3-k_4 \\
    \Omega &:= \vert k_1 \vert^2 + \vert k_2 \vert^2 + \vert k_3 \vert^2 + \vert k_4 \vert^2 .
\end{split}
\end{align}
We refer to classical physics textbooks on the topic such as \cite{Na} for the details of the procedure.

Another case of interest if the so-called inhomogeneous equation. It is derived from \eqref{MMT} set on $\mathbb{R}^3$ and reads $\partial_t f + v \cdot \nabla_x f = \mathcal{C}[f].$ 

\subsubsection{Prior works}
In one spatial dimension \eqref{DKWE} corresponds to the celebrated MMT equations \cite{MMT} with Laplacian dispersion relation. This model is widely used to test weak turbulence theory. As such it has been investigated extensively both numerically and theoretically. Closest to the results of the present paper, we mention the work \cite{GLZ} which proves local well-posedness of MMT ($1d$) in the case $-\frac{1}{2} \leq \beta \leq 0$ for general dispersion relation. We note that we assume on the contrary that $0 \leq \beta \leq 1$ since we are interested in the effect of quasilinearity on the well-posedness of the kinetic wave equation. In kinetic theory parlance, we treat hard potentials, while \cite{GLZ} deals with soft potentials. 

The case $\beta =0$ corresponds to one of the most canonical 4-wave kinetic equations. It is derived from the cubic Schr\"{o}dinger equation. The paper \cite{EV} by M. Escobedo and J.J.L. Velázquez constitutes an exhaustive study (local and global existence, asymptotic behavior) of weak solutions in the isotropic case. In \cite{CDG}, C. Collot, P. Germain and H. Dietert study in great detail the stability of Kolmogorov-Zakharov solutions for the same model. See also the work of A. Menegaki \cite{me23} for stability near equilibrium and the recent work of M. Escobedo and A. Menegaki \cite{EsMe24} for the instability of singular equilibria.     In the non-isotropic case, \cite{GeIoTr} provides local well-posedness for strong solutions in weighed $L^2$ spaces (still for $\beta=0$). 

We should stress that in the space inhomogeneous case, the (KWE) exhibits better long time asymptotics due to the dispersion introduced by the transport. Global well posedness in the mild sense has been first shown in \cite{Am} for exponentially decaying data and later for polynomially decaying data in space and frequency in \cite{AmMiPaTa24}, where the corresponding kinetic wave hierarchy was also studied. Recently in \cite{AmLe}, we exploited finer dispersion properties of the collisional operator, and constructed global in time strong solutions polynomially close to vacuum in frequency only, which propagate moments and exhibit scattering behavior.

\subsection{Results}
\subsubsection{Statement}
We formulate our well-posedness result in polynomially weighted spaces. Indeed the relevant solutions in the physical theory, namely Rayleigh-Jeans distributions (thermodynamic equilibrium) and Kolmogorov-Zakharov spectra (out-of-equilibrium) are of this type. It could also be interesting to consider exponentially weighted spaces in relation to numerical simulations. However we expect the method of proof in this case to be rather different than in the present paper.

A simplified version of our main result is
\begin{theorem}
Assume that $0 \leq \beta \leq \frac{1}{4}.$ Then \eqref{DKWE} is locally well-posed in weighted $L^\infty$ spaces. 

If $\beta>\frac{1}{4}$ then both \eqref{DKWE} and its gain-only counterpart are ill-posed in weighted $L^\infty$ spaces, in the sense that strong solutions cannot be constructed (regardless of the size of the initial datum).
\end{theorem}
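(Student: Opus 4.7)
The plan is to handle the well-posedness and ill-posedness regimes by essentially independent arguments, unified only by the fact that the resonant integration over $\{\Sigma = 0,\,\Omega = 0\}$ couples to the prefactors $|k_i|^{2\beta}$ through one and the same collisional averaging bound. Throughout I work in the weighted space $X_\alpha = \{f : \sup_k \langle k\rangle^\alpha |f(k)| < \infty\}$ for $\alpha$ to be fixed sufficiently large in terms of $\beta$.

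For well-posedness when $\beta \leq 1/4$, I would recast \eqref{DKWE} as the Duhamel fixed point $f(t) = f_0 + \int_0^t \mathcal{C}[f](s)\,ds$ and run a standard Picard iteration in $X_\alpha$. Expanding the bracket exhibits $\mathcal{C}[f]$ as a sum of four cubic monomials in $f$; on each one the pointwise bound $|f_i| \leq \|f\|_{X_\alpha}\langle k_i\rangle^{-\alpha}$ reduces the required trilinear estimate to the uniform boundedness in $k_1$ of
\begin{align*}
\langle k_1\rangle^{\alpha + 2\beta} \int_{\mathbb{R}^9} (|k_2||k_3||k_4|)^{2\beta} \langle k_2\rangle^{-\alpha} \langle k_3\rangle^{-\alpha} \langle k_4\rangle^{-\alpha}\, \delta(\Sigma)\,\delta(\Omega)\, dk_2\,dk_3\,dk_4.
\end{align*}
This is precisely a collisional averaging estimate of the type established in \cite{AmLe}, and the value $\beta = 1/4$ emerges as the sharp threshold at which such a bound can be secured for some admissible $\alpha$. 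The contraction then closes on a small ball in $X_\alpha$ over a short time interval, yielding local well-posedness.

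For ill-posedness when $\beta > 1/4$, the strategy is to exhibit, for any fixed $\alpha$, initial data $f_0 \in X_\alpha$ for which the output $\mathcal{C}[f_0]$ already fails to lie in $X_\alpha$, obstructing the construction of any strong solution in $C([0,T]; X_\alpha)$. For the gain-only operator I would take $f_0 \simeq N^{-\alpha}\mathbf{1}_{|k|\sim N}$ for large $N$ and choose the target frequency $k_1$ with $|k_1|\sim N$; parametrizing the resonant manifold inside $\{|k_i|\sim N\}$ yields a volume that, combined with the $|k_1|^{2\beta}$ prefactor and the $(|k_2||k_3||k_4|)^{2\beta}$ weights, produces a lower bound on $\mathcal{C}_{\mathrm{gain}}[f_0](k_1)$ growing faster than $\langle k_1\rangle^{-\alpha}$ as $N\to\infty$ exactly when $\beta > 1/4$, contradicting membership in $X_\alpha$.

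The main obstacle is extending this mechanism to the full equation, where the four cubic monomials carry opposite signs and partially cancel. The plan is to defeat the cancellations with asymmetric initial data, for instance a superposition $f_0 = f_0^{\mathrm{low}} + f_0^{\mathrm{high}}$ arranged so that on the resonant set supporting the target $k_1$ either the loss terms containing $f_1$ vanish (because $f_0$ is chosen small at $k_1$) or their contribution cannot offset the gain. Making this precise requires a careful decomposition of the resonant manifold according to the relative sizes of the four frequencies and a term-by-term analysis of the four monomials; the low-frequency dampening observed in the full equation should persist, but the $|k_1|^{2\beta}$ derivative loss remains the dominant high-frequency obstruction, producing the same sharp threshold $\beta > 1/4$.
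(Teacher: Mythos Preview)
Your well-posedness plan matches the paper's: trilinear bounds via the collisional averaging of \cite{AmLe}, then a contraction in $C([0,T];\langle k\rangle^{-M}L^\infty)$ for $M>6$.

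The ill-posedness arguments have genuine gaps. For the gain-only equation, the bump $f_0\simeq N^{-\alpha}\mathds{1}_{|k|\sim N}$ does not produce the claimed blow-up. A scaling count with all four frequencies $\sim N$ gives $\langle k_1\rangle^{\alpha}\mathcal{C}_{\mathrm{gain}}[f_0](k_1)\sim N^{8\beta+4-2\alpha}$, which is bounded once $\alpha>4\beta+2$; since the relevant spaces have $\alpha>6\geq 4\beta+2$ for $\beta\leq 1$, this never diverges. The actual dangerous region of the resonant manifold has two frequencies of size $O(1)$ and two of size $\sim|k_1|$, which bump data at a single scale cannot access. Moreover, for each fixed $N$ the output is compactly supported in $k$ and hence lies in $X_\alpha$; you would at best get norm inflation along a family, not a fixed datum with no strong solution. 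The paper instead takes $f_0=\langle k\rangle^{-M}$, uses monotonicity of the gain to propagate $n(t)\geq f_0$, and extracts from the small-frequency region the lower bound $n(t)\gtrsim t\,|k_1|^{4\beta-1-M}$, which leaves $\langle k\rangle^{-M}L^\infty$ immediately when $\beta>1/4$.

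For the full equation the low/high superposition is not the right mechanism. The paper records that for the pure weight $\langle k\rangle^{-M}$ the gain--loss cancellation is strong enough that $\mathcal{C}[\langle k\rangle^{-M}]$ remains in $\langle k\rangle^{-M}L^\infty$ all the way up to $\beta=3/4$; so in the range $1/4<\beta\leq 3/4$ a genuinely new idea is required, and asymmetry of frequency support is not it. The paper's device is oscillatory data $f_0=(A+\cos(N|k|^2))\langle k\rangle^{-M}$: the cancellation in $\mathcal{C}$ hinges on $n_4-n_1$ being of order $\omega_1^{-1}$, and the oscillation $\cos(N\omega)$ makes this difference of order one instead. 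After tuning $A,N$ large and evaluating along a sequence $\omega_1=2\pi B/N\to\infty$, one obtains $|\mathcal{C}[f_0]|\gtrsim\omega_1^{2\beta-1/2-M/2}$, and a continuity argument (any strong solution has $n(t)\to n^0$ in the weighted norm as $t\to 0$) then forces $n(t)\notin\langle\omega_1\rangle^{-M/2}L^\infty$ for small $t>0$. Your proposal contains no such oscillatory structure, and it is not apparent how support asymmetry alone would defeat the cancellation in the critical range.
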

Although we stated and proved the theorem for the homogeneous equation, our proof applies to the inhomogeneous case as well.

The well-posedness statement is obtained as a consequence of the averaging effect proved in our earlier work \cite{AmLe}. It allows us to cover the entire range, and therefore confirms that it is the appropriate tool to use in the setting of kinetic equations of systems of four wave interaction. In particular global existence and scattering results can be proved in this regime for the inhomogeneous version of \eqref{DKWE} following the method of \cite{AmLe}.

Our strategy for ill-posedness is to exhibit an initial datum such that the second Picard iterate is unbounded. We note that its size can be scaled, hence ill-posedness is true also in the small data regime. In the case of gain-only equation this is simply achieved by picking $\langle k_1 \rangle^{-M}$ as the initial datum. In the case of the full equation though, this strategy does not work due to cancellations in the collision operator (see Remark \ref{beta34} below). However adding oscillations destroys this effect.

\subsubsection{Physical interpretation}
We make several remarks about our results.
\begin{remark} \label{MMT-well-posed}
    The thresholds for well-posedness of \eqref{MMT} and \eqref{DKWE} do not appear to coincide. Indeed works of C. Kenig. G. Ponce and L. Vega show that Schr\"{o}dinger equations with polynomial nonlinearities involving $\nabla_x u, \nabla_x \bar{u},u,  \bar{u}$ are locally well-posed in $H^s,$ see \cite{KPV1}, \cite{KPV2}. For $\beta=\frac{1}{2},$ the nonlinearity in \eqref{MMT} is formally of this type, thus local well-posedness is strongly expected.
\end{remark}

\begin{remark}
$\beta=\frac{1}{4}$ corresponds to the largest exponent such that the estimate 
\begin{align} \label{betaSobolev}
    \Vert \vert \nabla \vert^\beta u \Vert_{L^4} \lesssim \Vert u \Vert_{\dot{H}^1}
\end{align}
holds by Sobolev embedding. This is consistent with the general principle that in weak turbulence, the existence of energy cascade is based on (approximate) conservation of kinetic energy (by analogy with hydrodynamic turbulence). For this heuristic to hold, the potential energy must be controlled by the kinetic part, which \eqref{betaSobolev} encodes.
\end{remark}

\begin{remark} \label{beta34}
The threshold $\beta = \frac{3}{4}$ is also natural. It is associated with boundedness of the second Picard iterate for the initial datum $\langle k_1 \rangle^{-M}.$ 

More precisely, for $\beta > \frac{3}{4}$ we have $\mathcal{C}[\langle k_1 \rangle^{-M}] \notin \langle k_1 \rangle^{-M} L^{\infty}_{k_1}.$ On the contrary $\mathcal{C}[\langle k_1 \rangle^{-M}] \in \langle k_1 \rangle^{-M} L^{\infty}_{k_1}$ for $0 \leq \beta \leq \frac{3}{4}$. This is in contrast with the gain only equation where the corresponding threshold is $\beta = \frac{1}{4}.$ The difference between the two is due to cancellations in the collision operator of the full equation. We rely on these cancellations in the proof of the main result, see subsection \ref{I4}.
\end{remark}

\begin{remark}
These two thresholds also correspond to the infinite/finite capacity dichotomy for the system in the direct energy and inverse waveaction cascades respectively.

Indeed, following \cite{Na} section 9.2, it is possible to derive the corresponding spectra on physical grounds: we take the ansatz $f(k_1) \sim k_1^{\nu},$ (in the isotropic case here, so $k_1$ denotes $\vert k_1 \vert$). Recall that the energy flux $\epsilon(k_1)$ satisfies $\partial_t E^{(1D)} + \partial_{k_1} \epsilon = 0,$ where $E^{(1D)}(k_1) := k_1^4 f_{k_1} $ denotes the energy spectrum.  

As a consequence we have 
\begin{align*}
    \epsilon(k_1) &\sim {k_1}^{2\beta} \int_{\mathbb{S}^{6}} \delta(\Sigma) d\sigma_2 d\sigma_3 d\sigma_4 \int_0^{k_1} q^2  \int_{[0;\infty)^3}  (k_2 k_3 k_4)^{2\beta} f_2 f_3 f_4 \delta(\Omega) (k_2 k_3 k_4 q)^{2}  dk_2 dk_3 dk_4 dq \\ 
    &+ \lbrace \textrm{similar terms} \rbrace \\
    & \sim \underbrace{k_1^{8 \beta}}_{\textrm{interaction}} \cdot \underbrace{k_1^{-3}}_{\delta(\Sigma)} \cdot \underbrace{k_1^{3 \nu}}_{f_2 f_3 f_4} \cdot \underbrace{k_1^{2}}_{\textrm{energy}} \cdot \underbrace{k_1^{-2}}_{\delta(\Omega)} \cdot\underbrace{k_1^{12}}_{\textrm{integration}}.
\end{align*}
Constant flux requires $\epsilon \sim k_1^0,$ hence $ \nu = -3-\frac{8}{3} \beta.$ The energy spectrum is then $E^{(1D)}(k_1) \sim k_1^{1-\frac{8}{3}\beta}.$ This is integrable if $\beta<\frac{3}{4}$ (finite capacity) and infinite capacity otherwise.

For the inverse waveaction cascade $E^{(1D)}(k_1) = k_1^2 f_{k_1},$ and by a similar reasoning we find that the flux scales like $\epsilon(k_1) \sim k_1^{-\frac{7}{3}-\frac{8 \beta}{3}}.$ Therefore the threshold between finite and infinite capacity is $\beta = \frac{1}{4}.$ 

\end{remark}

\begin{remark}
   At the mathematical level, the fact that both the gain-only and full equation have the same threshold legitimizes a gain-only approach to solving \eqref{DKWE}. This can be done by exploiting the monotonicity of the collision operator with the Kaniel-Shinbrot scheme for example. We stress that the situation should be different for the Boltzmann equation. Indeed the work of X. Chen and J. Holmer \cite{CH} shows that the loss and gain terms are not bounded on the same spaces.
\end{remark}

\subsection{Parametrization}
The collision operator can be written in gain-loss form as follows:
\begin{equation}\label{decomposition in gain and loss standard}
    \mathcal{C}[f]=\mathcal{G}[f]-\mathcal{L}[f],
\end{equation}
where
\begin{align} \label{gain def}
    \mathcal{G}[f] = \vert k_1 \vert^{2\beta} \int_{\mathbb{R}^{9}} \big(\vert k_2 \vert \vert k_3 \vert \vert k_4 \vert \big)^{2\beta} \big(f_2f_3f_4 + f_1 f_3f_4 \big) \delta(\Sigma)   \delta(\Omega) \, dk_2 dk_3 dk_4,   
\end{align}
and 
\begin{align} \label{loss def}
    \mathcal{L}[f] = \vert k_1 \vert^{2\beta} \int_{\mathbb{R}^{9}} \big(\vert k_2 \vert \vert k_3 \vert \vert k_4 \vert \big)^{2\beta} \big( f_1 f_2f_4 +  f_1 f_2 f_3 \big) \delta(\Sigma)   \delta(\Omega) \, dk_2 dk_3 dk_4. 
\end{align}
For our well posedness result, we follow ideas from \cite{AmLe} and further decompose the gain and loss operators exploiting the symmetries of the equation.

Namely, introducing the cut-off $b:=\mathds{1}_{(0,+\infty)}$ and using the symmetry of the product $f_2f_3$ we can write
\begin{equation}\label{further decomposition of gain}
  \mathcal{G}[f] =\mathcal{G}_1[f,f,f] + \mathcal{G}_2[f,f,f],  
\end{equation}
where we denote
\begin{align}
\mathcal{G}_1[f,g,h]&:=  2\int_{\R^9}\big(\vert k_1 \vert \vert k_2 \vert \vert k_3 \vert \vert k_4 \vert\big)^{2\beta}\delta(\Sigma)\delta(\Omega)f_2g_3h_4 b\left(\left(k_1-k_2\right)\cdot\left(k_3-k_4\right)\right)\,dk_2\,dk_3\,dk_4\label{G_1},  \\
\mathcal{G}_2[f,g,h]&:=  2\int_{\R^9}\big(\vert k_1 \vert \vert k_2 \vert \vert k_3 \vert \vert k_4 \vert \big)^{2\beta}\delta(\Sigma)\delta(\Omega)f_1g_3h_4 b\left(\left(k_1-k_2\right)\cdot\left(k_3-k_4\right)\right)\,dk_2\,dk_3\,dk_4\label{G_2}.
\end{align}
Similarly, using the symmetry of the sum $f_2+f_3$ instead, we can write
\begin{equation}\label{loss decomposed}
   \mathcal{L}[f]=\mathcal{L}_1[f,f,f]+\mathcal{L}_2[f,f,f], 
\end{equation}
where
\begin{align}
\mathcal{L}_1[f,g,h]&:=  2\int_{\R^9}\big(\vert k_1 \vert \vert k_2 \vert \vert k_3 \vert \vert k_4 \vert  \big)^{2\beta}\delta(\Sigma)\delta(\Omega)f_1g_2h_4 b\left(\left(k_1-k_2\right)\cdot\left(k_3-k_4\right)\right)\,dk_2\,dk_3\,dk_4\label{L_1},  \\
\mathcal{L}_2[f,g,h]&:=  2\int_{\R^9}\big(\vert k_1 \vert \vert k_2 \vert \vert k_3 \vert \vert k_4 \vert \big)^{2\beta}\delta(\Sigma)\delta(\Omega)f_1g_2h_3 b\left(\left(k_1-k_2\right)\cdot\left(k_3-k_4\right)\right) \,dk_2\,dk_3\,dk_4\label{L_2}.
\end{align}
After parametrization of the resonant manifold (see \cite{AmLe} for more details) we can express the operators $\mathcal{G}_1,\mathcal{G}_2,\mathcal{L}_1,\mathcal{L}_2$  as follows
\begin{align}
\mathcal{G}_1[f,g,h]&=\frac{1}{4}\int_{\R^3\times\S^2}\big(\vert k_1 \vert \vert k_2 \vert \vert k_1^{*} \vert \vert k_2^{*} \vert \big)^{2\beta}|k_1-k_2|f(k_2)g(k_1^*)h(k^*_2) b\left(\left(k_1-k_2\right)\cdot\sigma\right) \,d\sigma\,dk_2    \label{G_1 parametrized}\\
\mathcal{G}_2[f,g,h]&=\frac{1}{4}f(k_1)\int_{\R^3\times\S^2}\big(\vert k_1 \vert \vert k_2 \vert \vert k_1^{*} \vert \vert k_2^{*} \vert \big)^{2\beta}|k_1-k_2|g(k_1^*)h(k^*_2) b\left(\left(k_1-k_2\right)\cdot\sigma\right) \,d\sigma\,dk_2    \label{G_2 parametrized}\\
\mathcal{L}_1[f,g,h]&=\frac{1}{4}f(k)\int_{\R^3\times\S^2}\big(\vert k_1 \vert \vert k_2 \vert \vert k_1^{*} \vert \vert k_2^{*} \vert \big)^{2\beta}|k_1-k_2|g(k_2)h(k^*_2) b\left(\left(k_1-k_2\right)\cdot\sigma\right) \,d\sigma\,dk_2    \label{L_2 parametrized}\\
\mathcal{L}_2[f,g,h]&=\frac{1}{4}f(k_1)\int_{\R^3\times\S^2}\big(\vert k_1 \vert \vert k_2 \vert \vert k_1^{*} \vert \vert k_2^{*} \vert \big)^{2\beta}|k_1-k_2|g(k_2)h(k_1^*) b\left(\left(k_1-k_2\right)\cdot\sigma\right) \,d\sigma\,dk_2    \label{L_1 parametrized},
\end{align}
where given $\sigma\in \S^2$ we denote
\begin{align}
 k_1^*&=\frac{k_1+k_2}{2}+\frac{|k_1-k_2|}{2}\sigma\label{k_1^*}\\
 k_2^*&=\frac{k_1+k_2}{2}-\frac{|k_1-k_2|}{2}\sigma\label{k_2^*}
\end{align}
It is straightforward to check that $k_1,k_2,k_1^*,k_2^*$ provide the unique solution the conservation of momentum-energy system
\begin{align}
k_1^*+k_2^*&=k_1+k_2,\label{conservation of momentum}\\ 
|k_1^*|^2+|k_2^*|^2&=|k_1|^2+|k_2|^2,\label{conservation of energy}
\end{align}
which is draws an analogy between interactions of waves and elastic collisions of particles.
\subsection{Outline of the paper}
We first prove local well-posedness for $\beta \leq \frac{1}{4}$ by a fixed-point argument in Section \ref{section::LWP}. This largely relies on tools from our earlier work \cite{AmLe}, particularly the collision averaging estimate proved therein. 

To show ill-posedness above the threshold, we exhibit initial datum that lead to immediate loss of smoothness. Since all such examples are radial we work with the angularly averaged version of the equation, which we derive in Section \ref{isotropic}. We prove that both the gain-only and the full equation are ill-posed above the threshold in Sections \ref{gain:illposed} and \ref{illposed:full} respectively. We remark however that the example used for the gain only equation does not lead to ill-posedness for the full equation due to some cancellations in the nonlinearity. As a result the choice of initial datum is more subtle in the latter case.

\subsection{Notations}
We will write $A \lesssim B$ if there exists a numerical constant $C$ such that $A \leq C B.$ We will write $A \approx B$ if $A \lesssim B$ and $B \lesssim A.$ 

We use the standard japanese bracket notation $\langle x \rangle := \sqrt{1 + \vert x \vert^2}.$ 

For $v \in \mathbb{R}^3$ we write $\widehat{v} := \frac{v}{\vert v \vert}.$ 

\subsection*{Acknowledgements}
I.A. was supported by the NSF grant No. DMS-2418020. T.L. was supported by the Simons grant on wave turbulence. The authors thank Jalal Shatah for suggesting this problem as well as his comments on an earlier version of this manuscript. They also thank Gregory Falkovich for his remarks regarding the physical interpretation of their results.

\section{Well-posedness for $0 \leq \beta \leq \frac{1}{4}$} \label{section::LWP}
For this part of the paper, we follow along the lines of the strategy introduced in our earlier paper \cite{AmLe}. We start by recalling some technical results, chief among which our angular averaging lemma \ref{singint}. We deduce trilinear bounds for the collision operator on weighted $L^\infty$ spaces and use them to prove the desired local well-posedness result.

\subsection{Preliminary results}

The main tool to overcome the growth due to the cross-section is the following collisional averaging estimate proved in \cite{AmLe}. 
 \begin{lemma}[\cite{AmLe}] \label{singint}
 Let
\begin{align*}
F(k_1,k_2) : = \int_{\S^{2}}\frac{1}{\l k_1^*\r^{3}}\,d\sigma,\quad F_1(k_1,k_2) : = \int_{\S^{2}}\frac{1}{\l k_2^*\r^{3}}\,d\sigma 
\end{align*}
where 
\begin{align*}
  k_1^*&=\frac{k_1+k_2}{2}+\frac{|k_1-k_2|}{2}\sigma,\\
  k_2^*&=\frac{k_1+k_2}{2}-\frac{|k_1-k_2|}{2}\sigma
\end{align*}
Then, we have for all $k_1,k_2 \in \mathbb{R}^3$
\begin{align}\label{L inf bound}
F(k_1,k_2),\,F_1(k_1,k_2)\lesssim \frac{1}{1 + \vert k_1 \vert^2 + \vert k_2 \vert^2} .
\end{align}
\end{lemma}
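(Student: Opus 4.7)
The plan is to reduce the two-dimensional integral over $\S^2$ to a one-dimensional integral, evaluate it explicitly, and then exploit a crucial algebraic cancellation to extract the decay. First, by the $\sigma\mapsto -\sigma$ symmetry of $\S^2$ (which exchanges $k_1^*$ and $k_2^*$), one has $F = F_1$, so it suffices to bound $F$. Setting $p:=\frac{k_1+k_2}{2}$, $r:=\frac{|k_1-k_2|}{2}$, and $E:=\frac{|k_1|^2+|k_2|^2}{2}$, a short computation gives $|k_1^*|^2 = |p|^2+r^2+2r\,p\cdot\sigma = E+2r|p|\,(\hat{p}\cdot\sigma)$. The integrand depends on $\sigma$ only through $u:=\hat{p}\cdot\sigma$, and integrating out the azimuthal variable produces
\[
F = 2\pi\int_{-1}^{1}\frac{du}{(1+E+2r|p|\,u)^{3/2}}.
\]

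Evaluating this integral explicitly via the antiderivative $-2/\sqrt{\cdot}$ and then rationalizing the resulting difference of reciprocal square roots yields the closed form
\[
F = \frac{8\pi}{\sqrt{AB}\,(\sqrt{A}+\sqrt{B})},\qquad A:=1+E-2r|p|,\quad B:=1+E+2r|p|.
\]
The heart of the argument is the algebraic identity
\[
E^2-(2r|p|)^2 = (k_1\cdot k_2)^2,
\]
which follows from the well-known expansion $|k_1+k_2|^2|k_1-k_2|^2 = (|k_1|^2+|k_2|^2)^2 - 4(k_1\cdot k_2)^2$. Consequently $AB = (1+E)^2-(2r|p|)^2 = 1+2E+(k_1\cdot k_2)^2 \geq 1+2E$. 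Combined with the trivial lower bound $\sqrt{A}+\sqrt{B}\geq \sqrt{B}\geq \sqrt{1+E}$ and the elementary inequality $(1+E)(1+2E)\geq \tfrac{1}{2}(1+2E)^2$, one concludes
\[
F \leq \frac{8\pi}{\sqrt{(1+E)(1+2E)}} \lesssim \frac{1}{1+2E} \approx \frac{1}{1+|k_1|^2+|k_2|^2},
\]
as desired. The degenerate configurations $r|p|=0$ (i.e.\ $k_1=\pm k_2$), where the formula above is singular, are handled separately: there $|k_1^*|$ is a constant multiple of $\max(|k_1|,|k_2|)$ independent of $\sigma$, and the bound is immediate from a direct computation.

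The main obstacle is exploiting the cancellation $E^2-(2r|p|)^2 = (k_1\cdot k_2)^2$ correctly: without it, the naive lower bounds $A,B\gtrsim 1$ produce only $F\lesssim 1$ and lose all the decay in $(|k_1|,|k_2|)$. Geometrically, this identity encodes the fact that $k_1^*$ lies on the sphere of radius $r$ centered at $p$, so that the resonance constraint $|k_1^*|^2+|k_2^*|^2 = 2E$ forces $k_1^*$ to be comparable to $\sqrt{E}$ on a definite portion of $\S^2$; this is precisely the momentum--energy conservation responsible for the collisional averaging effect that powers the well-posedness theory in the sequel.
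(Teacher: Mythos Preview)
Your proof is correct. The paper itself does not include a proof of this lemma; it is cited from the authors' earlier work \cite{AmLe}, so there is no in-paper argument to compare against. Your approach---reducing to a one-dimensional integral via $u=\hat p\cdot\sigma$, evaluating explicitly, and then invoking the identity $E^2-(2r|p|)^2=(k_1\cdot k_2)^2$ to obtain $AB\geq 1+2E$---is clean and self-contained, and the handling of the degenerate cases $k_1=\pm k_2$ is adequate (indeed the closed formula $F=8\pi/(\sqrt{AB}(\sqrt A+\sqrt B))$ extends by continuity to those cases as well).
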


We also recall the following useful $L^1$ estimates 
\begin{lemma}[\cite{AmLe}]\label{pre average lemma} The following estimates hold
\begin{align}
\sup_{k_1\in\R^3}\int_{\R^3\times \S^2}\,|\psi(k_2^*)|b\left(\frac{k_1-k_2}{|k_1-k_2|}\cdot\sigma\right)\,dk_2\,d \sigma&\lesssim \|\psi\|_{L^{1}}\label{estimate for v_1 pre},\\
\sup_{k_2\in\R^3}\int_{\R^3\times \S^2}\,|\psi(k_1^*)|b\left(\frac{k_1-k_2}{|k_1-k_2|}\cdot\sigma\right)\,dk_1\,d \sigma&\lesssim \|\psi\|_{L^{1}}.\label{estimate for v pre}
\end{align}
\end{lemma}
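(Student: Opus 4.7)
The plan is to reduce both bounds to a single Carleman-type computation. The two estimates in Lemma \ref{pre average lemma} are in fact equivalent under the involution $(k_1,k_2,\sigma)\mapsto (k_2,k_1,-\sigma)$ of $\R^3\times\R^3\times\S^2$: this preserves the measure $dk_2\,d\sigma$ and the $b$-cutoff (since $\tfrac{k_1-k_2}{|k_1-k_2|}\cdot\sigma$ is invariant) while exchanging $k_1^*$ and $k_2^*$. I therefore focus on the first estimate with $k_1$ held fixed, and denote the integral by $I$.

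The core step is to unfold via $1=\int_{\R^3}\delta^{(3)}(z-k_2^*)\,dz$ and integrate over $\sigma$ first. Using the elementary identity
\begin{equation*}
\int_{\S^2}\delta^{(3)}(a+r\sigma)\,d\sigma=\tfrac{1}{r^2}\,\delta(r-|a|), \qquad r>0,\ a\in\R^3,
\end{equation*}
applied with $a=z-\tfrac{k_1+k_2}{2}$ and $r=\tfrac{|k_1-k_2|}{2}$, the $\sigma$-integration collapses to a one-dimensional delta in $k_2$. A short algebraic manipulation identifies the resulting constraint with $(k_1-z)\cdot(k_2-z)=0$; thus $k_2$ is forced onto the $2$-plane $\Pi_z$ through $z$ perpendicular to $k_1-z$. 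The $b$-cutoff, combined with the identity $|k_2^*-k_1|^2=\tfrac{1}{2}|k_1-k_2|^2(1-\widehat{k_2-k_1}\cdot\sigma)$ coming directly from the parametrization, translates into the disk condition $|k_2-z|<|k_1-z|$ on $\Pi_z$.

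Integrating the remaining one-dimensional delta yields a Jacobian factor $|k_1-z|^{-1}$, while the factors of $|k_1-k_2|$ collected along the way collapse to a $4/|k_1-y|$ integrand on $\Pi_z$, where $y$ parametrizes the plane. Since $y\in\Pi_z$, the Pythagorean relation gives $|k_1-y|^2=|k_1-z|^2+|y-z|^2$, so polar coordinates on $\Pi_z$ centered at $z$ reduce $I$ to
\begin{equation*}
I=\int_{\R^3}|\psi(z)|\cdot\frac{1}{|k_1-z|}\int_0^{|k_1-z|}\frac{8\pi\,r}{\sqrt{|k_1-z|^2+r^2}}\,dr\,dz=8\pi(\sqrt{2}-1)\,\|\psi\|_{L^1},
\end{equation*}
the inner integral being a pure constant independent of $k_1$ and $z$.

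The main technical subtlety is the bookkeeping of weights: the successive $\delta$-reductions produce several factors of $|k_1-k_2|$, $|k_1-z|$ and $|k_1-y|$ that must cancel precisely for the $y$-integral to remain uniformly bounded. This cancellation is exactly what makes the $L^1\to L^\infty$ bound possible; without the $b$-cutoff restricting $y$ to a disk of radius \emph{exactly} $|k_1-z|$, a logarithmic divergence would survive and destroy the estimate.
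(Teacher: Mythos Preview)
The paper does not actually prove this lemma; it merely recalls it from the earlier work \cite{AmLe}. So there is no in-paper proof to compare against. That said, your argument is correct and is precisely the Carleman-representation computation one expects: the symmetry reduction, the spherical delta identity, the identification of the constraint surface with the plane $\Pi_z=\{k_2:(k_1-z)\cdot(k_2-z)=0\}$, the translation of the cutoff $b$ into the disk condition $|k_2-z|<|k_1-z|$, and the final radial integral all check out. The explicit constant $8\pi(\sqrt{2}-1)$ is right.

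One small correction to your closing commentary: if you drop the $b$-cutoff, the inner integral becomes $\int_0^\infty \tfrac{8\pi r}{\sqrt{|k_1-z|^2+r^2}}\,dr$, whose integrand tends to $8\pi$ as $r\to\infty$. The divergence is therefore \emph{linear}, not logarithmic. This does not affect the proof itself, only the heuristic remark at the end.
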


\subsection{Trilinear bounds}
We deduce several trilinear bounds for collision operators from the facts recalled in the previous section. The proofs follow our earlier paper \cite{AmLe} closely. 

\begin{lemma}
Assume that $0 \leq \beta \leq 1/4.$ 
For $\mathcal{T} \in \lbrace \mathcal{L}_1, \mathcal{L}_2 \rbrace$ and $l \geq 0$, we have

\begin{align} \label{trilossmain}
    \Vert \langle k_1 \rangle^l \mathcal{T}[f,g,h] \Vert_{L^{\infty}_{k_1}} \lesssim \Vert \langle k_1 \rangle^l f \Vert_{L^\infty_{k_1}} \Vert \langle k_1 \rangle^{1/2} g \Vert_{L^1_{k_1}} \Vert \langle k_1 \rangle^{7/2} h \Vert_{L^{\infty}_{k_1}}.
\end{align}
We also have
\begin{equation}\label{G_2 weighted estimate}
\|\l k_1\r^l \mathcal{G}_2[f,g,h]\|_{L^\infty_{k_1}}\leq \|\l k_1 \r^l f\|_{L^\infty_{k_1}}\|\l k_1 \r^3 g\|_{L^\infty_{k_1}}\|\l k_1\r^3 h\|_{L^1_{k_1}} . 
\end{equation}
Finally
\begin{align}\label{G1 weighted estimate}
    \Vert \langle k_1 \rangle^l \mathcal{G}_1[f,g,h] \Vert_{L^{\infty}_{k_1}}\lesssim \Vert \l k_1 \r^{1/2} f \Vert_{L^1_{k_1}} \Vert \langle k_1 \rangle^l g \Vert_{L^{\infty}_{k_1}} \Vert \langle k_1 \rangle^{7/2} h \Vert_{L^{\infty}_{k_1}}+  \|\l k_1 \r^l f\|_{L^\infty_{k_1}}\|\l k_1\r^3 g\|_{L^\infty_{k_1}}\|\l k_1\r^3 h\|_{L^1_{k_1}}
\end{align}
\end{lemma}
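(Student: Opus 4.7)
The plan is to follow the strategy of \cite{AmLe}, treating each operator via its parametrized form \eqref{G_1 parametrized}--\eqref{L_1 parametrized} together with the two averaging estimates of Lemmas \ref{singint} and \ref{pre average lemma}. The key structural observation is that by conservation of energy $|k_1^*|^2+|k_2^*|^2=|k_1|^2+|k_2|^2$ together with the restriction $\beta\leq 1/4$ (so $2\beta\leq 1/2$), the growth coming from the cross section $(|k_1||k_2||k_1^*||k_2^*|)^{2\beta}|k_1-k_2|$ can be absorbed either into pointwise weights extracted from $L^\infty$ norms of factors evaluated at the starred variables, or into the $L^1$ averaging inequality; which of these two devices is used for which factor is precisely what distinguishes the three bounds.

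For the loss operators $\mathcal{L}_1,\mathcal{L}_2$, I pull $f(k_1)$ out as $\l k_1\r^{-l}\|\l k\r^l f\|_{L^\infty}$, which already reproduces the desired $\l k_1\r^{-l}$ prefactor, and then bound $h(k_1^*)$ pointwise by $\l k_1^*\r^{-7/2}\|\l k\r^{7/2}h\|_{L^\infty}$. Since $|k_1^*|^{2\beta}\l k_1^*\r^{-7/2}\leq \l k_1^*\r^{-3}$ in our range, Lemma \ref{singint} becomes directly applicable after bounding the remaining $\sigma$-dependent weight $|k_2^*|^{2\beta}$ by $(|k_1|^2+|k_2|^2)^\beta$ via energy conservation. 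The $\sigma$-integration then contributes the decay $(1+|k_1|^2+|k_2|^2)^{-1}$, and a dichotomy on whether $\l k_2\r\leq 2\l k_1\r$ or $\l k_2\r>2\l k_1\r$ reduces the residual $k_2$-integral against $g(k_2)$ to one controlled by $\|\l k\r^{1/2}g\|_{L^1}$.

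For $\mathcal{G}_2$ I again pull out $f(k_1)$, then bound $g(k_1^*)$ pointwise by $\l k_1^*\r^{-3}\|\l k\r^3 g\|_{L^\infty}$. Using $|k_1^*|^{2\beta}\l k_1^*\r^{-3}\leq 1$ and bounding the remaining $k_2^*$-independent pieces of the cross section by powers of $\l k_1\r\l k_2\r$, Lemma \ref{pre average lemma} applied with $\psi(k_2^*)=\l k_2^*\r^3 h(k_2^*)$ recovers the factor $\|\l k\r^3 h\|_{L^1}$ after absorbing $\l k_2^*\r^{2\beta-3}$ into $\l k_2^*\r^3$. The estimate for $\mathcal{G}_1$ combines both strategies: in its first term, $g(k_1^*)$ and $h(k_2^*)$ are placed in $L^\infty$ while $f(k_2)$ is integrated in $L^1$, exactly as in the loss argument but using the twin bound on $F_1$ in Lemma \ref{singint} to perform the $\sigma$-average against $\l k_2^*\r^{-3}$; in its second term, $f(k_2)$ and $g(k_1^*)$ are placed in $L^\infty$ and $h(k_2^*)$ is handled through Lemma \ref{pre average lemma}, mirroring the $\mathcal{G}_2$ argument.

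The hard part is the bookkeeping of weights that forces the exponents to close precisely at $\beta=1/4$. After the averaging lemma is applied to the loss term, the integrand grows like $\l k_1\r^{4\beta-1}\l k_2\r^{2\beta}$ on $\{\l k_2\r\leq 2\l k_1\r\}$ and like $\l k_1\r^{2\beta}\l k_2\r^{4\beta-1}$ on its complement; the weight $1/2$ on $g$ is just enough to absorb the second case, since there we may trade $\l k_1\r^{2\beta}$ for $\l k_2\r^{2\beta}$ and the resulting exponent $6\beta-1$ on $\l k_2\r$ satisfies $6\beta-1\leq 1/2$ exactly when $\beta\leq 1/4$. This algebraic pinch-point is the arithmetic source of the well-posedness threshold.
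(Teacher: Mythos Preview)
Your loss argument is essentially correct (modulo swapped starred indices: for $\mathcal{L}_1$ the factor $h$ sits at $k_2^*$, not $k_1^*$, and the leftover $\sigma$-dependent weight is $|k_1^*|^{2\beta}$). The paper reaches the same conclusion without your dichotomy $\langle k_2\rangle\leq 2\langle k_1\rangle$ versus $\langle k_2\rangle> 2\langle k_1\rangle$: it bounds $|k_2|^{2\beta}\leq\langle k_2\rangle^{1/2}$ and $|k_2^*|^{2\beta}\langle k_2^*\rangle^{-7/2}\leq\langle k_2^*\rangle^{-3}$ directly, then disposes of the remaining factor $\frac{|k_1-k_2|\,|k_1|^{2\beta}|k_1^*|^{2\beta}}{1+|k_1|^2+|k_2|^2}\leq 1$ in one stroke via energy conservation.

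There is, however, a genuine gap in your $\mathcal{G}_2$ argument. Once you bound $|k_1^*|^{2\beta}\langle k_1^*\rangle^{-3}\leq 1$ and absorb $\langle k_2^*\rangle^{2\beta-3}\leq 1$, the ``remaining $k_2^*$-independent pieces'' are $|k_1|^{2\beta}|k_2|^{2\beta}|k_1-k_2|$. Saying these are ``bounded by powers of $\langle k_1\rangle\langle k_2\rangle$'' is true but vacuous: those powers grow like $(|k_1|+|k_2|)^{1+4\beta}$, and at this stage no decay remains to absorb them --- the $f(k_1)$ factor has already supplied only the $\langle k_1\rangle^{-l}$ prefactor, and there is no $g(k_2)$ in $\mathcal{G}_2$. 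Lemma~\ref{pre average lemma} would then leave an unbounded prefactor in $k_1$. The paper does \emph{not} discard the starred-variable decay; it retains $\langle k_1^*\rangle^{-2}\langle k_2^*\rangle^{-2}$ and uses both conservation laws $|k_1-k_2|=|k_1^*-k_2^*|$ and $|k_1|^2+|k_2|^2=|k_1^*|^2+|k_2^*|^2$ to convert the unstarred growth into starred quantities:
\[
\frac{|k_1-k_2|\,|k_1|^{2\beta}|k_2|^{2\beta}}{\langle k_1^*\rangle^2\langle k_2^*\rangle^2}
\;\leq\; \frac{|k_1^*-k_2^*|\,(|k_1^*|^2+|k_2^*|^2)^{2\beta}}{\langle k_1^*\rangle^2\langle k_2^*\rangle^2}\;\lesssim\; 1,
\]
the last inequality using precisely $4\beta\leq 1$. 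What remains is $\frac{|k_1^*|^{2\beta}|k_2^*|^{2\beta}}{\langle k_1^*\rangle\langle k_2^*\rangle}\,\widetilde g(k_1^*)\widetilde h(k_2^*)\leq |\widetilde g(k_1^*)|\,|\widetilde h(k_2^*)|$, and now Lemma~\ref{pre average lemma} applies cleanly with $\psi=\widetilde h$. Your second $\mathcal{G}_1$ term inherits the same gap and the same fix.
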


\begin{proof}
We write
\begin{align*}
    \Vert \langle k_1 \rangle^l \mathcal{L}_1[f,g,h] \Vert_{L^{\infty}_{k_1}} & \lesssim \Vert \langle k_1 \rangle^l f \Vert_{L^\infty_{k_1}} \Bigg \Vert \int_{\mathbb{R}^3 \times \mathbb{S}^{2}} \vert k_2 \vert^{2 \beta} g(k_2) \vert k_1 \vert^{2 \beta} \vert k_1^* \vert^{2 \beta} \vert k_1-k_2 \vert \vert k_2^{*} \vert^{2\beta} h(k_2^{*}) \,  d\sigma dk_2  \Bigg \Vert_{L^{\infty}_{k_1}}  \\
    & \lesssim \Vert \langle k_1 \rangle^l f \Vert_{L^\infty_{k_1}} \Vert \vert k_1 \vert^{1/2} g \Vert_{L^1_{k_1}} \Bigg \Vert \int_{\S^{2}} \langle k_2^{*} \rangle^{3+2\beta} h(k_2^{*}) \frac{\vert k_1 - k_2 \vert \vert k_1 \vert^{2\beta} \vert k_1^* \vert^{2 \beta}  }{\langle k_2^{*} \rangle^3}  d\sigma \Bigg \Vert_{L^{\infty}_{k_1,k_2}} \\
    & \lesssim \Vert \langle k_1 \rangle^l f \Vert_{L^\infty_{k_1}} \Vert \vert k_1 \vert^{1/2} g \Vert_{L^1_{k_1}} \Vert \langle k_1 \rangle^{7/2} h \Vert_{L^\infty_{k_1}} \Bigg \Vert \frac{(\vert k_1 \vert + \vert k_2 \vert) \vert k_1 \vert^{2\beta} \vert k_1^* \vert^{2 \beta} }{1 + \vert k_1 \vert^2 + \vert k_2 \vert^2} \Bigg \Vert_{L^{\infty}_{k_1,k_2}}.
\end{align*}
Next write that using Lemma \ref{singint}
\begin{align*}
\frac{(\vert k_1 \vert + \vert k_2 \vert) \vert k_1 \vert^{2\beta} \vert k_1^* \vert^{2 \beta} }{1 + \vert k_1 \vert^2 + \vert k_2 \vert^2} \leq \frac{\vert k_1^* \vert^{2\beta}}{\big(1 + \vert k_1 \vert^2 + \vert k_2 \vert^2 \big)^{1/4}} \leq \frac{\vert k_1^* \vert^{2\beta}}{\big(1 + \vert k_1^* \vert^2 + \vert k_2^* \vert^2 \big)^{1/4}} \leq 1,
\end{align*}
where for the penultimate inequality we used energy conservation. 

The proof for $\mathcal{L}_2$ is identical, substituting $k_2^{*}$ with $k_1^{*}.$ 

For the second estimate we write that, denoting $\widetilde{g}(k_1)=\l k_1 \r^3 g(k_1)$, $\widetilde{h}(k_1)=\l k_1\r^3 h(k_1)$:
\begin{align*}
   & \|\l k_1\r^l\mathcal{G}_2[f,g,h]\|_{L^\infty_{k_1}}\\
   &\lesssim\|\l k_1\r^l f\|_{L^\infty_{k_1}}\left\|\int_{\R^3\times\S^2}\frac{|k_1-k_2| \vert k_1 \vert^{2\beta} \vert k_2 \vert^{2\beta}}{\l k_1^*\r ^2 \l k_2^*\r ^2 } \frac{\vert k_1^* \vert^{2\beta} \vert k_2^* \vert^{2\beta}}{\l k_1^*\r \l k_2^*\r }  \widetilde{g}(k_1^*)\widetilde{h}(k_2^*)b\left(\frac{k_1-k_2}{|k_1-k_2|}\cdot\sigma\right)\,dk_2\,d\sigma\right\|_{L^\infty_{k_1}}\\
    & \leq \|\l k_1\r^l f\|_{L^\infty_{k_1}} \|\widetilde{g}\|_{L^\infty_{k_1}} \left\|\int_{\R^3\times\S^2}\frac{|k_1-k_2| \vert k_1 \vert^{2\beta} \vert k_2 \vert^{2\beta}}{\l k_1^*\r^2 \l k_2^*\r^2} \vert \widetilde{h}(k_2^*) \vert b\left(\frac{k_1-k_2}{|k_1-k_2|}\cdot\sigma\right)\,dk_2\,d\sigma\right\|_{L^\infty_{k_1}}\\
    &\lesssim \|\l k_1\r^l f\|_{L^\infty_{k_1}} \|\widetilde{g}\|_{L^\infty_{k_1}} \left\|\int_{\R^3\times\S^2} \vert \widetilde{h}(k_2^*) \vert b\left(\frac{k_1-k_2}{|k_1-k_2|}\cdot\sigma\right)\,dk_2\,d\sigma\right\|_{L^\infty_{k_1}}\\
    &\leq \|\l k_1\r^l f\|_{L^\infty_{k_1}} \|\widetilde{g}\|_{L^\infty_{k_1}}\|\widetilde{h}\|_{L^1_{k_1}},
\end{align*}
where in the last two lines we used the fact that by conservation of momentum and energy
$$\frac{|k_1-k_2| \vert k_1 \vert^{2\beta} \vert k_2 \vert^{2\beta}}{\l k_1^*\r^2  \l k_2^*\r^2 } \leq \frac{|k_1-k_2| \big( \vert k_1 \vert^{2} + \vert k_2 \vert^2 \big)^{2\beta}}{\l k_1^*\r^2  \l k_2^*\r^2 } = \frac{|k_1^*-k_2^*| \big( \vert k_1^* \vert^{2} + \vert k_2^* \vert^2 \big)^{2\beta}}{\l k_1^*\r^2  \l k_2^*\r^2 } \lesssim 1 ,$$
as well as Lemma \ref{pre average lemma}.

\end{proof}

\begin{corollary}
For $\mathcal{T} \in \lbrace \mathcal{G}_1, \mathcal{G}_2, \mathcal{L}_1, \mathcal{L}_2 \rbrace$ and $l>6$, we have
\begin{align} \label{LWP estimate}
    \Vert \langle k_1 \rangle^l \mathcal{T}[f,g,h] \Vert_{L^{\infty}_{k_1}} \lesssim \Vert \l k_1 \r^l f \Vert_{L^\infty_{k_1}} \Vert \l k_1 \r^l g \Vert_{L^\infty_{k_1}} \Vert \l k_1 \r^l h \Vert_{L^\infty_{k_1}}.
\end{align}
\end{corollary}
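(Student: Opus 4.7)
The corollary is a direct consequence of the three trilinear bounds \eqref{trilossmain}, \eqref{G_2 weighted estimate}, and \eqref{G1 weighted estimate} combined with the elementary embedding
\[
\Vert \langle k_1\rangle^{a} u\Vert_{L^1_{k_1}} \;\leq\; \Vert \langle k_1\rangle^{l} u\Vert_{L^\infty_{k_1}} \cdot \Vert \langle k_1\rangle^{a-l}\Vert_{L^1_{k_1}}, \qquad \text{finite whenever } l-a>3,
\]
and the trivial monotonicity $\Vert\langle k_1\rangle^{b} u\Vert_{L^\infty_{k_1}}\leq \Vert\langle k_1\rangle^{l} u\Vert_{L^\infty_{k_1}}$ for $b\leq l$. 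The first step is to identify, for each operator, the weights $a$ appearing in the $L^1$-slots and $b$ appearing in the (non-top) $L^\infty$-slots of the trilinear bounds.

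For $\mathcal{T}\in\{\mathcal{L}_1,\mathcal{L}_2\}$, estimate \eqref{trilossmain} places the $L^1$-norm on $g$ with weight $a=1/2$ and the auxiliary $L^\infty$-norm on $h$ with weight $b=7/2$; since $l>6>7/2$, both $l-a>3$ and $b\leq l$ hold, so the desired bound follows. For $\mathcal{G}_2$, the bound \eqref{G_2 weighted estimate} has an $L^1$-norm on $h$ with weight $a=3$ and an auxiliary $L^\infty$-norm on $g$ with weight $b=3$; the condition $l-3>3$ is precisely the hypothesis $l>6$, which explains why this is the sharp threshold. For $\mathcal{G}_1$, the bound \eqref{G1 weighted estimate} is a sum of two terms, each of which falls into one of the two cases already treated.

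Putting these pieces together term by term yields
\[
\Vert \langle k_1\rangle^l \mathcal{T}[f,g,h]\Vert_{L^\infty_{k_1}} \;\lesssim\; \Vert\langle k_1\rangle^l f\Vert_{L^\infty_{k_1}}\Vert\langle k_1\rangle^l g\Vert_{L^\infty_{k_1}}\Vert\langle k_1\rangle^l h\Vert_{L^\infty_{k_1}},
\]
with implicit constant depending on $l$ through $\Vert\langle k_1\rangle^{a-l}\Vert_{L^1_{k_1}}$.

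There is no substantive obstacle here since the real work has been absorbed into the lemma; the only thing to watch is that the threshold $l>6$ is sharp (dictated by the $\mathcal{G}_2$ term with $a=3$ in dimension $3$) and cannot be relaxed without losing one of the two $L^\infty$ factors of the right-hand side. Once this is observed, the corollary is a one-line consequence for each of the four operators.
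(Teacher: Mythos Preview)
Your proof is correct and follows exactly the same approach as the paper: apply the trilinear bounds \eqref{trilossmain}, \eqref{G_2 weighted estimate}, \eqref{G1 weighted estimate} from the preceding lemma and convert each $L^1$-weighted norm into a weighted $L^\infty$-norm via $\Vert \langle k_1\rangle^{a} u\Vert_{L^1_{k_1}} \lesssim \Vert \langle k_1\rangle^{a-l}\Vert_{L^1_{k_1}}\Vert \langle k_1\rangle^{l} u\Vert_{L^\infty_{k_1}}$, which is finite precisely when $l>a+3$. The paper's own proof is a one-line version of this, recording only the worst case $a=3$ (hence $l>6$); your case-by-case breakdown is a faithful expansion of that line.
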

\begin{proof}
The result directly follows from the previous lemma and the fact that
\begin{align*}
    \Vert \l k_1 \r^3 f \Vert_{L^1_{k_1}} \lesssim \Vert \l k_1 \r^{-3^{-}} \Vert_{L^1_{k_1}}  \Vert \l k_1 \r^{6^+} f \Vert_{L^\infty_{k_1}}.
\end{align*}
\end{proof}

\subsection{Strong solutions} \label{LWP}
In this section we construct local strong solutions to \eqref{DKWE}. To make matters more precise, we introduce the notion of a strong solution to \eqref{DKWE}:

\begin{definition}\label{definition of a solution} 
Let $T>0$ and $f_0\in \langle k_1 \rangle^{-M} L^\infty_{k_1}$. We say that $f\in \mathcal{C}([0,T];\l k_1\r^{-M}L^\infty_{k_1})$ is a solution of \eqref{DKWE} on $[0,T]$ with initial datum $f_0$, if
\begin{equation}\label{Duhamel formula}
f(t)=f_0+\int_0^t \mathcal{C}[f](s)\,ds,\quad  
t\in [0,T].
\end{equation}  
\end{definition}

\begin{theorem}
If $0 \leq \beta \leq \frac{1}{4},$ then for all $M>6,$ \eqref{DKWE} is locally well-posed for $f_0 \in \l k_1 \r^{-M} L^{\infty}_{k_1}.$ 
\end{theorem}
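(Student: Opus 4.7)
The plan is a standard Picard fixed point argument applied to the Duhamel formulation \eqref{Duhamel formula}, with all of the analytic content already encapsulated in the trilinear estimate \eqref{LWP estimate}. Fix $M>6$ and set $X:=\langle k_1\rangle^{-M}L^\infty_{k_1}$, equipped with the norm $\|f\|_X=\|\langle k_1\rangle^M f\|_{L^\infty_{k_1}}$. For $T>0$ and $R>0$ to be chosen, consider the complete metric space
\[
Y_{T,R}:=\bigl\{\,f\in\mathcal{C}([0,T];X):\ \|f\|_{L^\infty_t X}\leq R\,\bigr\}
\]
and the map $\Phi(f)(t):=f_0+\int_0^t\mathcal{C}[f](s)\,ds$. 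The goal is to show that, for $R\approx\|f_0\|_X$ and $T$ sufficiently small depending on $\|f_0\|_X$, $\Phi$ maps $Y_{T,R}$ into itself and is a strict contraction.

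To check the self-mapping property, decompose $\mathcal{C}=\mathcal{G}_1+\mathcal{G}_2-\mathcal{L}_1-\mathcal{L}_2$ as in \eqref{further decomposition of gain}--\eqref{loss decomposed}. Applying the trilinear bound \eqref{LWP estimate} to each piece with $l=M$ yields $\|\mathcal{C}[f](s)\|_X\lesssim \|f(s)\|_X^3$ uniformly in $s$. Integrating in time gives
\[
\|\Phi(f)\|_{L^\infty_t X}\leq \|f_0\|_X+CT\,\|f\|_{L^\infty_t X}^3\leq \|f_0\|_X+CT R^3.
\]
Choosing $R=2\|f_0\|_X$ and $T\leq (8C\|f_0\|_X^2)^{-1}$ keeps the right-hand side below $R$.

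For the contraction estimate, use that each of $\mathcal{G}_1,\mathcal{G}_2,\mathcal{L}_1,\mathcal{L}_2$ is trilinear in $(f,g,h)$, so for $f,g\in Y_{T,R}$ one writes
\[
\mathcal{T}[f,f,f]-\mathcal{T}[g,g,g]=\mathcal{T}[f-g,f,f]+\mathcal{T}[g,f-g,f]+\mathcal{T}[g,g,f-g].
\]
Applying \eqref{LWP estimate} to each term gives $\|\mathcal{C}[f]-\mathcal{C}[g]\|_X\lesssim R^2\|f-g\|_X$, so that
\[
\|\Phi(f)-\Phi(g)\|_{L^\infty_t X}\leq C'TR^2\|f-g\|_{L^\infty_t X}.
\]
Shrinking $T$ further so that $C'TR^2<\tfrac12$ makes $\Phi$ a contraction on $Y_{T,R}$, and the Banach fixed point theorem furnishes a unique $f\in Y_{T,R}$ with $\Phi(f)=f$, which is the desired strong solution in the sense of Definition \ref{definition of a solution}. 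Continuity in time of $f$ as a map into $X$ follows from the Duhamel formula and the uniform bound on $\|\mathcal{C}[f](s)\|_X$.

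There is no real obstacle: all of the difficulty — the singularity of the resonant manifold and the growth produced by the cross-section weight $(|k_1||k_2||k_3||k_4|)^{2\beta}$ up to $\beta=\tfrac14$ — has been absorbed into the trilinear estimate \eqref{LWP estimate}, via the averaging Lemma \ref{singint} together with the conservation laws \eqref{conservation of momentum}--\eqref{conservation of energy}. The only thing that still must be verified carefully is that no $\beta$-dependence remains in the constants at the endpoint $\beta=\tfrac14$; this is immediate from the proof of \eqref{LWP estimate}, where the bound $\vert k_1^*\vert^{2\beta}/(1+|k_1^*|^2+|k_2^*|^2)^{1/4}\leq 1$ is saturated precisely at $\beta=\tfrac14$.
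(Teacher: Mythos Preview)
Your proof is correct and follows essentially the same route as the paper's: a contraction mapping argument on a ball in $\mathcal{C}([0,T];\langle k_1\rangle^{-M}L^\infty_{k_1})$, with both the self-mapping and contraction properties reduced directly to the trilinear estimate \eqref{LWP estimate}. The paper's version is terser (it fixes $R=\|f_0\|_X$, $T=1/(8C_1R^2)$ and checks stability and contraction in one line each), but the argument is identical.
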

\begin{remark}
    Given that the free transport is an isometry on the spaces $\langle k \rangle^{-M} L^\infty_{xk},$ the proof applies to the inhomogeneous equation as well. 
\end{remark}
\begin{proof}
Let $C_1 > 0$ be the numerical constant in \eqref{LWP estimate}. Let $R := \Vert \l k_1 \r^M f_0 \Vert_{L^{\infty}_{k_1}},$ $T:= \frac{1}{8 C_1 R^2}$ and  
\begin{align*}
\textbf{B}(2R) := \bigg \lbrace f \in \mathcal{C} \big( [0,T]; \l k_1 \r^{-M} L^{\infty}_{k_1} \big) ; \sup_{t \in [0;T]} \Vert \l k_1 \r^{M} f(t) \Vert_{L^{\infty}_{k_1}} \leq 2R \bigg \rbrace.
\end{align*}

Consider the map
\begin{align*} \Phi: 
    \begin{cases}
    \textbf{B}(2R) & \longrightarrow \, \mathcal{C} \big( [0,T]; \l k_1 \r^{-M} L^{\infty}_{k_1} \big) \\
    f & \longmapsto \,  f_0 + \int_0^t \mathcal{C}[f](s) \, ds
    \end{cases} .
\end{align*}
We will prove that $\Phi$ is a contraction on $\textbf{B}(2R).$

\underline{Stability:} Let $f \in \textbf{B}(2R).$ Using \eqref{LWP estimate} we find
\begin{align*}
    \Vert \l k_1 \r^M \Phi (f) \Vert_{L^\infty_{k_1}} \leq R + C_1 T (2R)^3 = 2R .
\end{align*}

\underline{Contraction:} Let $f,g \in \textbf{B}(2R).$ Using \eqref{LWP estimate} we find

\begin{align*}
    \big \Vert \l k_1 \r^M \big( \Phi (f) - \Phi(g) \big) \big \Vert_{L^\infty_{k_1}} \leq 4 C_1 R^2 T \big \Vert \l k_1 \r^M \big( f-g \big) \big \Vert_{L^\infty_{k_1}} \leq \frac{1}{2}  \Vert \l k_1 \r^M \big( f-g \big) \big \Vert_{L^\infty_{k_1}} . 
\end{align*}
The proof is complete by the contraction mapping theorem.

\end{proof}

We now move to the main part of the paper where we prove our ill-posedness results.

\section{Isotropic equation} \label{isotropic}
The initial datum leading to ill-posedness will be isotropic i.e. $f^0(k_1) = n^0(\vert k_1 \vert^2)$ for some function $n^0:[0,+\infty)\to\R$. In this section, we first show that the evolution of isotropic datum remains isotropic and derive the equation it satisfies. Then, we further decompose the isotropic collisional operator based on the value of the cross-section.

\subsection{Derivation}
The main result of this section is 
\begin{lemma} \label{isotropic-full}
Let $f^0 \in \langle k_1 \rangle ^{-M} L^\infty_{k_1}$ be isotropic.
Assume that \eqref{DKWE} has a unique strong solution  $f(t,k_1)$ (in the sense of Definition \ref{definition of a solution}) in $[0,T]$ with initial datum $f^0$. Then, denoting $\omega_1:=\vert k_1 \vert^2,$ there exists unique $n \in \mathcal{C} \big([0,T];\langle \omega_1 \rangle^{-M/2} L^\infty_{\omega_1}([0,\infty))\big)$ such that $n(t,\omega_1) = f(t,k_1).$ 
Moreover $n(t,\omega_1)$ is the unique strong solution \footnote{The definition of a strong solution to \eqref{iso-DKWE} is given analogously to Definition \ref{definition of a solution}} of
\begin{equation}\label{iso-DKWE}
    \begin{cases}
        \partial_tn=\mathcal{C}[n]\\
        n(0)=n^0
    \end{cases},\quad t\in[0,T],
\end{equation}
where $n^0(\omega_1)=f^0(k_1)$. The isotropic collisional operator  $\mathcal{C}$ (we slightly abbreviate notation still denoting it by $\mathcal{C}$) is given by
\begin{equation}\label{iso collisional operator}
\begin{aligned}  
    \mathcal{C}[n] &= \int_{0 \leq \omega_3 \leq \omega_4, \,\omega_2\geq 0} \hspace{-.3cm}S(\omega_1,\omega_2,\omega_3,\omega_4) \, \left(n_2n_3n_4+n_1n_3n_4-n_1n_2n_3-n_1n_2n_4\right)
     d\omega_3 d\omega_4,\\
     &\omega_2=\omega_4+\omega_3-\omega_1,\\
      &S(\omega_1,\omega_2,\omega_3,\omega_4)=64 \pi^3 \omega_1^{\beta-1/2} (\omega_2 \omega_3 \omega_4)^{\beta} \min \lbrace \sqrt{\omega_1}, \sqrt{\omega_2},\sqrt{\omega_3}\rbrace,
\end{aligned}
\end{equation}
 and above  we denote $n_i:=n(t,\omega_i)$ for $i\in\{1,2,3,4\}$.
\end{lemma}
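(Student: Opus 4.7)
The plan divides into two parts: first, propagate isotropy via the rotational symmetry of \eqref{DKWE} and uniqueness of strong solutions; then, perform explicit angular integrations to reduce the three-dimensional collision integral to the one-dimensional kernel $S$.

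For the first part, the key observation is that $\mathcal{C}$ commutes with the natural $SO(3)$-action: for any $R\in SO(3)$, the change of variables $k_i\mapsto Rk_i$ ($i=2,3,4$) in \eqref{DKWE} preserves $|k_i|$, $\delta(\Sigma)$ and $\delta(\Omega)$, giving $\mathcal{C}[f\circ R](k_1)=\mathcal{C}[f](Rk_1)$. If $f$ is the strong solution of \eqref{DKWE} with isotropic datum $f^0$, then $f\circ R$ also solves \eqref{DKWE} with data $f^0\circ R=f^0$, so uniqueness of strong solutions (Section \ref{LWP}) forces $f\circ R=f$ for every $R\in SO(3)$. Thus $f(t,\cdot)$ is radial at every time, so $n(t,\omega_1):=f(t,k_1)$ for any $k_1$ with $|k_1|^2=\omega_1$ is unambiguously defined. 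Its continuity in $\langle\omega_1\rangle^{-M/2}L^\infty_{\omega_1}$ is inherited from that of $f$ in $\langle k_1\rangle^{-M}L^\infty_{k_1}$, since $\langle\omega_1\rangle^{1/2}\approx\langle k_1\rangle$.

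To derive \eqref{iso-DKWE}, I would substitute $f_i=n(\omega_i)$ into $\mathcal{C}[f]$ and integrate out the angular variables. Using $\delta(\Sigma)$ to eliminate $k_4$ sets $\omega_4=|k_1+k_2-k_3|^2$. Passing to spherical coordinates $k_j=r_j\sigma_j$ with $\omega_j=r_j^2$ replaces $dk_2\,dk_3$ by $\tfrac14\sqrt{\omega_2\omega_3}\,d\omega_2\,d\omega_3\,d\sigma_2\,d\sigma_3$. The remaining constraint $\delta(\Omega)$ is linear in the angular cosines of $k_2,k_3$ relative to $k_1$, so it can be resolved to integrate out two angular variables explicitly. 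Symmetrizing under $k_3\leftrightarrow k_4$ to restrict to $\omega_3\leq\omega_4$, and tracking the volumes of $\S^1$ and $\S^2$, then produces the stated kernel $S$ together with the constant $64\pi^3$; the $\omega_1^{\beta-1/2}$ prefactor combines $|k_1|^{2\beta}=\omega_1^\beta$ with the $\omega_1^{-1/2}$ arising from the Jacobian of the angular resolution of $\delta(\Omega)$.

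The main technical obstacle is producing the factor $\min\{\sqrt{\omega_1},\sqrt{\omega_2},\sqrt{\omega_3}\}$. Its origin is geometric: the angular integration has nonzero support only when four vectors in $\mathbb R^3$ of prescribed lengths $\sqrt{\omega_i}$ can be arranged so as to satisfy both conservation laws, and the volume of such admissible configurations evaluates to a multiple of $\min_i\sqrt{\omega_i}$; on the domain $\omega_3\leq\omega_4$ this simplifies to the stated minimum of three square roots. The underlying calculation is classical in the weak-turbulence literature and I would follow the derivation in \cite{Na} directly. Finally, the uniqueness of $n$ as a strong solution of \eqref{iso-DKWE} follows from that of $f$: any other strong solution $\widetilde n$ of \eqref{iso-DKWE} lifts to an isotropic strong solution $\widetilde f(t,k):=\widetilde n(t,|k|^2)$ of \eqref{DKWE} with datum $f^0$, which must coincide with $f$, forcing $\widetilde n=n$.
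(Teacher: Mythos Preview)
Your argument is correct and follows the same two-step structure as the paper: rotational invariance of $\mathcal{C}$ together with uniqueness of strong solutions to propagate isotropy, followed by passage to spherical coordinates and evaluation of the angular integral via a classical formula quoted from the physics literature. The only organizational difference is in the second step: the paper keeps all four $k_j$-integrations, averages the equation over the direction $\Theta_1$ of $k_1$, and then invokes the closed formula
\[
\int_{(\mathbb{S}^2)^4}\delta(\Sigma)\,d\Theta_1\,d\Theta_2\,d\Theta_3\,d\Theta_4=\frac{32\pi^3}{\sqrt{\omega_1\omega_2\omega_3\omega_4}}\min\{\sqrt{\omega_1},\sqrt{\omega_2},\sqrt{\omega_3},\sqrt{\omega_4}\}
\]
from \cite{ZSKN}, with $\delta(\Omega)$ fixing $\omega_2$; you instead eliminate $k_4$ through $\delta(\Sigma)$ first and then resolve $\delta(\Omega)$ in the angles of $k_2,k_3$, citing \cite{Na}. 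Both routes are legitimate and yield the same kernel; the paper's version keeps the symmetry in $\omega_1,\dots,\omega_4$ manifest until the final symmetrization, while yours is closer to a direct computation. One small imprecision: $\delta(\Omega)$ removes a single angular degree of freedom, not two, though the remaining angular integrations are indeed explicit once you are on the constraint surface (where $\omega_4=\omega_1+\omega_2-\omega_3$ and hence $n_4$ no longer depends on angles).
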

\begin{remark} \label{rmk-inhomo}
A similar statement can be shown for solutions to the inhomogeneous equation.
\end{remark}

\begin{proof}
% Since the initial datum $f^0$ is isotropic, there exists $n^0:[0,\infty)\to\R$ such that $f^0(k_1)=n^0(\omega_1)$.
We observe that if $f(t,k_1)$ is a solution to \eqref{DKWE}, then for any rotation $R \in SO(3),$ $f(t,Rk_1)$ also solves \eqref{DKWE}. This can be seen from the parametrization \eqref{G_1 parametrized}-\eqref{L_1 parametrized} by changing variables and using invariance of the norm by rotation. Since $f^0$ is isotropic, both functions are coincide at time 0, hence by  uniqueness we have $f(t,Rk_1) = f(t,k_1)$ for all $t \in [0;T].$ Therefore, there exists $n \in \mathcal{C}\big([0,T];\langle \omega_1 \r ^{-M/2} L^\infty_{\omega_1}([0,\infty))\big)$ such that $f(t, k_1)=n(t,\omega_1)$, where we denote $\omega_1:=|k_1|^2$. 

We now derive the equation satisfied by $n(t,\omega_1)$ following \cite{ZSKN}. We use spherical coordinates to write $dk_j = \sqrt{\omega_j} d \omega_j d\Theta_j,$ with $d\Theta_j$ the measure on the unit sphere and $\omega_j:=|k_j|^2$.

Next, exploiting that $n$ is isotropic we can average over the angle and use Fubini's theorem to obtain
\begin{align*}
    n(t,\omega_1) &=\frac{1}{4 \pi}\int_{\mathbb{S}^2}  n(t,\omega_1) d\Theta_1 \\
     &=n^{0}(\omega_1) +\int_0^t\omega_1^{\beta/2}\int_{[0,\infty)^3}  \mathcal{P} n_1 n_2 n_3 n_4 \big( \frac{1}{n_1} + \frac{1}{n_2} -\frac{1}{n_3} - \frac{1}{n_4}   \big) (\omega_2 \omega_3 \omega_4)^{\frac{1+\beta}{2}} \delta(\Omega) d\omega_2 d\omega_3 d\omega_4\,ds
\end{align*}
with 
\begin{align*}
    \mathcal{P} = \int_{\mathbb{S}^8} \delta(\Sigma) d\Theta_1 d\Theta_2 d\Theta_3 d\Theta_4 = \frac{32 \pi^3}{\sqrt{\omega_1 \omega_2 \omega_3 \omega_4}} \min \lbrace \sqrt{\omega_1},\sqrt{\omega_2}, \sqrt{\omega_3}, \sqrt{\omega_4} \rbrace,
\end{align*}
where the second equality can be found in \cite{ZSKN}, Appendix A.

% Thus
% \begin{equation}\label{iso-KWE}
% \begin{aligned}
%  &\partial_t n_1 = \frac{1}{2} \int_{\omega_3, \omega_4 \geq 0 } \widetilde{S}(\omega_1,\omega_2,\omega_3,\omega_4) \, n_1 n_2 n_3 n_4 \big(\frac{1}{n_1} + \frac{1}{n_2} - \frac{1}{n_3} - \frac{1}{n_4}
%     \big) d\omega_3 d\omega_4, \\
%  &\omega_2 = \omega_3 + \omega_4 - \omega_1, \\
%   &   \widetilde{S}(\omega_1,\omega_2,\omega_3,\omega_4)= 64 \pi^3 \omega_1^{\beta-1/2} (\omega_2 \omega_3 \omega_4)^{\beta} \min \lbrace \sqrt{\omega_1}, \sqrt{\omega_2},\sqrt{\omega_3},\sqrt{\omega_4} \rbrace,
% \end{aligned}
% \end{equation}
Using symmetry of the integrand as well, the equation can be equivalently written in the form \eqref{iso-DKWE}.
\end{proof}
\subsection{Decomposition of the isotropic collision operator}\label{subsec:domain} We first decompose the collisional operator as 
\begin{equation}
\mathcal{C}[n]= \mathcal{C}^{234}[n,n,n]+\mathcal{C}^{134}[n,n,n]-\mathcal{C}^{123}[n,n,n]-\mathcal{C}^{124}[n,n,n],
\end{equation}
where given  $\bm{j}=j_1j_2j_3\in\{234,134,123,124\}$, we denote
\begin{equation}\label{C^ijk}
\mathcal{C}^{\bm{j}}[k,l,m]:= \int_{0\leq\omega_3\leq\omega_4,\,\omega_2\geq 0}S(\omega_1,\omega_2,\omega_3,\omega_4)k_{j_1}l_{j_2}m_{j_3}\,d\omega_3\,d\omega_4, 
\end{equation}
and we recall that $\omega_2=\omega_4+\omega_3-\omega_1$.

Then we further decompose each $\mathcal{C}^{\bm{j}}$ based on the value of the cross-section $S$ as follows
\begin{equation}
\mathcal{C}^{\bm{j}}[k,l,m]=\mathcal{C}_1^{\bm{j}}[k,l,m]+\mathcal{C}_2^{\bm{j}}[k,l,m]+\mathcal{C}_3^{\bm{j}}[k,l,m],    
\end{equation}
where given $i\in\{1,2,3\}$ we denote
\begin{equation}\label{operator i j}
\mathcal{C}_i^{\bm{j}}[k,l,m]:=64\pi^3\omega_1^{\beta-1/2}\int_{0\leq\omega_3\leq\omega_4,\,\omega_2\geq 0}\sqrt{\omega_i}(\omega_2\omega_3\omega_4)^\beta\, \mathds{1}_{\omega_i=\min\{\omega_1,\omega_2,\omega_3\}}k_{j_1}l_{j_2}m_{j_3}\,d\omega_3\,d\omega_4.    
\end{equation}
We note that the operators $\mathcal{C}_i^{\bm{j}}$ are multilinear and monotone for positive inputs. 

We now determine the domain of integration corresponding to each $i\in\{1,2,3\}$.

% We will use the notation $\mathcal{C}^{j}_i$ for $j \in \lbrace 123,234,134, 124 \rbrace$ and $i \in \lbrace 1,2,3 \rbrace$ to denote the operator with nonlinear term $n_{j_1} n_{j_2} n_{j_3}$ where $j = j_1 j_2 j_3, j \in \lbrace 123,234,134, 124 \rbrace$ and an additional cut-off $\textbf{1}_{\omega_i = \min \lbrace \omega_1,\omega_2,\omega_3 \rbrace}$ in the integrand.

\subsubsection{Operator $\mathcal{C}_2^{\bm{j}}$.} \label{domain:C2} 
In this case, the condition $\omega_2 \leq \omega_3$ implies $\omega_4 \leq \omega_1.$ Moreover the condition $\omega_2 \geq 0$ gives $\omega_4 \geq \omega_1-\omega_3.$ Since we also have $\omega_4 \geq \omega_3,$ we  further split the operator $\mathcal{C}_2^{\bm{j}}$  into $\mathcal{C}_{21}^{\bm{j}}$ and $\mathcal{C}_{22}^{\bm{j}}$ with corresponding domains of integration
\begin{align}
    \mathcal{D}_{21}&:=\left\{(\omega_3,\omega_4)\in [0,\infty)^2\,:\, 0\leq\omega_3\leq\frac{\omega_1}{2},\,\,\omega_1-\omega_3\leq\omega_4\leq\omega_1 \right\}\label{D21}\\
    \mathcal{D}_{22}&:=\left\{(\omega_3,\omega_4)\in [0,\infty)^2\,:\, \frac{\omega_1}{2}\leq\omega_3\leq\omega_4\leq\omega_1 \right\}\label{D22}.
\end{align}
That is 
\begin{equation}\label{C_2^j}
    \mathcal{C}_2^{\bm{j}}[k,l,m]=\mathcal{C}_{21}^{\bm{j}}[k,l,m]+\mathcal{C}_{22}^{\bm{j}}[k,l,m],
\end{equation}
where 
\begin{align}
    \mathcal{C}_{21}^{\bm{j}}[k,l,m]&:=64\pi^3\omega_1^{\beta-\frac{1}{2}}\int_{\mathcal{D}_{21}}\omega_2^{\beta+\frac{1}{2}}\omega_3^\beta \omega_4^\beta\, k_{j_1}l_{j_2}m_{j_3}\,d\omega_3\,d\omega_4,\label{C_21^j}\\
    \mathcal{C}_{22}^{\bm{j}}[k,l,m]&:=64\pi^3\omega_1^{\beta-\frac{1}{2}}\int_{\mathcal{D}_{22}}\omega_2^{\beta+\frac{1}{2}}\omega_3^\beta \omega_4^\beta\, k_{j_1}l_{j_2}m_{j_3}\,d\omega_3\,d\omega_4.\label{C_22^j}
\end{align}

\subsubsection{Operator $\mathcal{C}_3^{\bm{j}}$.} \label{domain:C3}
Here the condition $\omega_3 \leq \omega_2$ gives $\omega_1 \leq \omega_4.$ Then the condition $\omega_4 \geq \omega_1 - \omega_3$ is automatically satisfied, and since $\omega_3 \leq \omega_1$ we also have $\omega_4 \geq \omega_3.$ Therefore, the operator $\mathcal{C}_3^{\bm{j}}$ is expressed as
\begin{equation}\label{C_3^j}
\mathcal{C}^{\bm{j}}_3[k,l,m]=64\pi^3\omega_1^{\beta-\frac{1}{2}}\int_{\mathcal{D}_3}\omega_2^\beta\omega_3^{\beta+\frac{1}{2}}\omega_4^\beta\, k_{j_1}l_{j_2}m_{j_3}\,d\omega_3\,d\omega_4,    
\end{equation}
where the domain of integration is given by
\begin{align}
    \mathcal{D}_3&:=\left\{(\omega_3,\omega_4)\in[0,\infty)^2\,: 0\leq\omega_3\leq\omega_1\leq\omega_4\right\}\label{D3}.
\end{align}

\subsubsection{Operator $\mathcal{C}_1^{\bm{j}}$.} \label{domain:C1}
This part will always be a lower order term. The condition $\omega_2 \geq 0$ is always satisfied. Therefore, the operator $\mathcal{C}_1^{\bm{j}}$ is expressed as
\begin{equation}\label{C_1^j}
\mathcal{C}^{\bm{j}}_1[k,l,m]=64\pi^3\omega_1^\beta\int_{\mathcal{D}_1}\omega_2^\beta\omega_3^\beta\omega_4^\beta\, k_{j_1}l_{j_2}m_{j_3}\,d\omega_3\,d\omega_4,    
\end{equation}
where the domain of integration is 
\begin{align}
    \mathcal{D}_1&:=\left\{(\omega_3,\omega_4)\in[0,\infty)^2\,: 0\leq\omega_1\leq\omega_3\leq\omega_4\right\}\label{D1}.
\end{align}

\section{Ill-posedness for the gain-only equation \eqref{DKWE} if $\beta>\frac{1}{4}$} \label{gain:illposed}

We now move to the gain-only equation \eqref{DKWE} $\partial_t f = \mathcal{G}[f]$ (recall that $\mathcal{G}$ is defined in \eqref{gain def}). Our notion of strong solution in this setting is as in Definition \ref{definition of a solution} with $\mathcal{C}$ replaced by $\mathcal{G}.$

Through a straightforward adaptation of the proof, we have the following gain-only version of Lemma \ref{isotropic-full}
\begin{lemma} \label{isotropic-gain}
Let $f^0 \in \langle k \rangle ^{-M} L^\infty_k$ be isotropic.
Assume that the gain-only equation \eqref{DKWE} with initial datum $f^0$ has a unique strong solution on $[0,T].$ Then there exists a  unique function $n \in \mathcal{C}([0,T];\langle \omega_1 \rangle^{-M/2} L^\infty_{\omega_1}([0,+\infty))$ such that $n(t,\omega_1) = f(t,k_1).$ 
Moreover $n(t,\omega_1)$ is the unique strong solution of
\begin{equation}\label{gain IVP}
\begin{cases}
    \partial_t n = \mathcal{C}^{234}[n,n,n] + \mathcal{C}^{134}[n,n,n]\\
    n(0)=n^0
    \end{cases},\quad t\in[0,T],
\end{equation}
 where $n^0(\omega_1)=f^0(k_1)$ and $\mathcal{C}^{234},\mathcal{C}^{134}$ are given by \eqref{C^ijk}.
\end{lemma}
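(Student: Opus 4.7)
The plan is to simply mirror the proof of Lemma \ref{isotropic-full}, keeping track of which of the four cubic terms in the full collision operator survive when one restricts to the gain piece $\mathcal{G}=\mathcal{G}_1+\mathcal{G}_2$ defined in \eqref{G_1}-\eqref{G_2}.

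First I would establish isotropy of $f(t,k_1)$. The key point is that the parametrized expressions \eqref{G_1 parametrized}-\eqref{G_2 parametrized} for $\mathcal{G}_1,\mathcal{G}_2$ are invariant under rotations: for any $R\in SO(3)$, the change of variable $k_2\mapsto Rk_2$, $\sigma\mapsto R\sigma$ preserves the measure $dk_2\,d\sigma$, the Jacobian weights $|k_1||k_2||k_1^*||k_2^*|$, the factor $|k_1-k_2|$, and the cut-off $b((k_1-k_2)\cdot\sigma)$ (since $k_j^*$ get rotated along with $k_1,k_2$). Hence $k_1\mapsto f(t,Rk_1)$ solves the same gain-only equation with initial datum $f^0(R\cdot)=f^0$. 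By the assumed uniqueness of strong solutions we conclude $f(t,Rk_1)=f(t,k_1)$, so there exists $n(t,\omega_1)$ with $f(t,k_1)=n(t,|k_1|^2)$, in the stated regularity class.

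Next I would derive the equation satisfied by $n$, exactly as in the proof of Lemma \ref{isotropic-full}: pass to spherical coordinates $dk_j=\tfrac12\sqrt{\omega_j}\,d\omega_j\,d\Theta_j$, average the Duhamel identity over $\Theta_1\in\mathbb{S}^2$, and use Fubini together with the angular integral
\[
\int_{(\mathbb{S}^2)^4}\delta(\Sigma)\,d\Theta_1 d\Theta_2 d\Theta_3 d\Theta_4 \;=\; \frac{32\pi^3}{\sqrt{\omega_1\omega_2\omega_3\omega_4}}\min\{\sqrt{\omega_1},\sqrt{\omega_2},\sqrt{\omega_3},\sqrt{\omega_4}\}
\]
recalled in the previous lemma from \cite{ZSKN}. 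This produces the same cross-section $S(\omega_1,\omega_2,\omega_3,\omega_4)$ as in \eqref{iso collisional operator}. The only change from the proof of Lemma \ref{isotropic-full} is in the cubic factor: the integrand of $\mathcal{G}[f]$ contributes $f_2f_3f_4+f_1f_3f_4$ (see \eqref{gain def}), which under the isotropy $f_i=n_i$ reduces to $n_2n_3n_4+n_1n_3n_4$. After reducing the resonant manifold to the domain $\{0\leq\omega_3\leq\omega_4,\,\omega_2\geq 0\}$ by the same symmetry/Fubini argument as in Lemma \ref{isotropic-full}, these two terms match exactly the definition \eqref{C^ijk} of $\mathcal{C}^{234}[n,n,n]$ and $\mathcal{C}^{134}[n,n,n]$, giving the equation in \eqref{gain IVP}.

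Finally, uniqueness of $n$ is immediate: if $\tilde{n}$ is another strong solution of \eqref{gain IVP}, then $\tilde{f}(t,k_1):=\tilde{n}(t,|k_1|^2)$ is by construction an isotropic strong solution of the gain-only equation with datum $f^0$, hence equals $f$ by the assumed uniqueness, and therefore $\tilde{n}=n$. There is no real obstacle here; the only mildly delicate point is checking that the symmetrization argument used to obtain the domain $\{0\leq\omega_3\leq\omega_4,\,\omega_2\geq 0\}$ in Lemma \ref{isotropic-full} is consistent with retaining only the gain-type cubic factors (i.e.\ that the two exchanges $\omega_3\leftrightarrow\omega_4$ and $(\omega_1,\omega_3)\leftrightarrow(\omega_2,\omega_4)$ do not mix gain and loss), which one verifies by inspecting each cubic monomial individually.
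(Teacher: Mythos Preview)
Your proposal is correct and is precisely the ``straightforward adaptation'' the paper alludes to; indeed the paper does not spell out a proof of Lemma~\ref{isotropic-gain} at all, merely asserting that it follows from the argument for Lemma~\ref{isotropic-full}. Your added remark about the $\omega_3\leftrightarrow\omega_4$ symmetry preserving the gain monomials is a nice sanity check but not strictly needed beyond what is already implicit in the proof of Lemma~\ref{isotropic-full}.
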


% \subsection{Technical lemma}
% We will make repeated use of the following lemma in what follows.
% \begin{lemma} \label{technical}
% Let $A,B>1$ and $0 < a\leqslant b.$ Then 
% \begin{align*}
%     \int_0^{\infty} \frac{1}{1+(a+x)^A} \frac{1}{1+(b+x)^B} dx \approx \frac{1}{1+a^{A-1}} \frac{1}{1+b^B}.
% \end{align*}
% \end{lemma}
% \begin{proof}
% Split the integral in three regions:

% \underline{Case 1: $x \in [0;a]$.} Then 
% \begin{align*}
% \frac{1}{1+(a+x)^A} \frac{1}{1+(b+x)^B} \approx \frac{1}{1+a^A} \frac{1}{1 + b^B}
% \end{align*}
% and the result follows.

% \underline{Case 2: $x \in [a;b]$.} Then 

% \begin{align*}
% \int_a^{b} \frac{1}{1+(a+x)^A} \frac{1}{1+(b+x)^B} dx \approx \frac{1}{1+b^B} \int_a^b \frac{1}{1+(a+x)^A} dx
% \end{align*}
% and the result follows.

% \underline{Case 3: $x \in [b;+\infty)$.} Then 
% \begin{align*}
% \int_b^{\infty} \frac{1}{1+(a+x)^A} \frac{1}{1+(b+x)^B} dx \approx \frac{1}{1+b^A} \int_b^{\infty} \frac{1}{(b+x)^B} dx
% \end{align*}
% and we can conclude.
% \end{proof}

% {\color{blue}
% \comment{IA Can't we write as an estimate on $$\int_0^\infty \frac{1}{\l a+x\r^A}\frac{1}{\l b+x\r^B}\,dx$$
% I think it will be easier to apply. Also clarify the size of $a$? Maybe also state the  three different cases separately because we care about the lower bound

% }
% }

We now state our first ill-posedness result.

\begin{theorem} \label{thm:gain-only}
Assume that $\beta>\frac{1}{4}.$ Then the gain-only equation \eqref{DKWE} is ill-posed in all the spaces $\l k_1 \r^{-M} L^{\infty}_{k_1}$ for $M>6.$ That is, there exists some initial datum in $\l k_1 \r^{-M} L^{\infty}_{k_1}$ such that no strong solution can be constructed. 
\end{theorem}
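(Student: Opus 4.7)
The plan is to argue by contradiction using the initial datum $f^0(k_1) := \langle k_1 \rangle^{-M}$ suggested in Remark \ref{beta34}. Suppose a strong solution $f \in \mathcal{C}([0,T]; \langle k_1 \rangle^{-M} L^\infty_{k_1})$ of the gain-only equation exists with $f(0) = f^0$. By continuity at $t=0$ there is $T' \in (0, T]$ such that $\| f(t) - f^0 \|_{\langle k_1 \rangle^{-M} L^\infty_{k_1}} \leq 1/2$ for $t \in [0, T']$, which translates into the pointwise sandwich $\tfrac{1}{2} \langle k_1 \rangle^{-M} \leq f(t, k_1) \leq \tfrac{3}{2} \langle k_1 \rangle^{-M}$. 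Since $\mathcal{G}$ is multilinear and monotone on nonnegative inputs, we deduce $\mathcal{G}[f(s)](k_1) \geq \tfrac{1}{8} \mathcal{G}[f^0](k_1)$ pointwise for $s \in [0, T']$. Combining with the pointwise Duhamel identity and the upper bound on $f(t) - f^0$ yields
\[
\frac{t}{8}\, \mathcal{G}[f^0](k_1) \,\leq\, \int_0^t \mathcal{G}[f](s, k_1)\, ds \,=\, f(t, k_1) - f^0(k_1) \,\leq\, \tfrac{1}{2} \langle k_1 \rangle^{-M}
\]
for $t \in [0, T']$ and every $k_1 \in \mathbb{R}^3$. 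In particular $\mathcal{G}[f^0] \in \langle k_1 \rangle^{-M} L^\infty_{k_1}$.

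Since $f^0$ is rotation-invariant, the spherical-averaging manipulation from the proof of Lemma \ref{isotropic-full} applies to $f^0$ on its own (no uniqueness statement is needed, since we are merely averaging the explicit integral defining $\mathcal{G}[f^0]$) and gives
\[
\mathcal{G}[f^0](k_1) \,=\, \mathcal{C}^{234}[n^0,n^0,n^0](|k_1|^2) \,+\, \mathcal{C}^{134}[n^0,n^0,n^0](|k_1|^2), \qquad n^0(\omega) := (1+\omega)^{-M/2}.
\]
Both summands are nonnegative, so the previous bound forces $\mathcal{C}^{234}[n^0,n^0,n^0] \in \langle \omega_1 \rangle^{-M/2} L^\infty_{\omega_1}$.

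The final and main step is a pointwise lower bound on $\mathcal{C}^{234}[n^0,n^0,n^0](\omega_1)$ that exceeds $\omega_1^{-M/2}$ in order of magnitude as $\omega_1 \to \infty$. Using the decomposition of Subsection \ref{subsec:domain}, I restrict to the positive piece $\mathcal{C}_3^{234}$ on $\mathcal{D}_3$, and further to the subregion
\[
\omega_3 \in [\tfrac{1}{2}, 1], \qquad \omega_4 \in [\omega_1, \omega_1+1], \qquad \text{so that } \omega_2 = \omega_4+\omega_3-\omega_1 \in [\tfrac{1}{2}, 2].
\]
On this subregion $\omega_2$ and $\omega_3$ are of unit order, so $n^0_2, n^0_3$ are bounded below by positive constants and $\omega_2^\beta, \omega_3^{\beta+1/2}$ are bounded above and below; meanwhile $\omega_4^\beta \sim \omega_1^\beta$ and $n^0_4 \sim \omega_1^{-M/2}$. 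Together with the explicit prefactor $\omega_1^{\beta-1/2}$ in the cross-section $S$, the integrand is bounded below by $c\,\omega_1^{2\beta-1/2-M/2}$, and since the subregion has area $1/2$ we obtain
\[
\mathcal{C}^{234}[n^0,n^0,n^0](\omega_1) \,\geq\, c'\, \omega_1^{2\beta - 1/2 - M/2} \qquad \text{as } \omega_1 \to \infty.
\]
For $\beta > \tfrac{1}{4}$ the exponent strictly exceeds $-M/2$, contradicting the weighted bound derived in the first step.

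The main obstacle is the choice of subregion: one must isolate a single factor of $\omega_1^\beta$ from the cross-section (via $\omega_4 \sim \omega_1$) while keeping $\omega_2, \omega_3$ (and hence $n^0_2, n^0_3$) of unit order, so that the only $n^0$-decay comes from $n^0_4 \sim \omega_1^{-M/2}$. The threshold $\beta > 1/4$ is then exactly the condition $2\beta - 1/2 > 0$ for the net weighted exponent to be positive.
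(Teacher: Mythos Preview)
Your proof is correct and follows essentially the same strategy as the paper: exhibit the radial datum $\langle k_1\rangle^{-M}$, reduce to the isotropic formulation, and establish the lower bound $\mathcal{C}^{234}[n^0,n^0,n^0](\omega_1)\gtrsim \omega_1^{2\beta-1/2-M/2}$, which contradicts membership in the weighted space when $\beta>\tfrac14$. The only cosmetic differences are that the paper extracts the lower bound on the subdomain $\mathcal{D}_{21}$ rather than your subregion of $\mathcal{D}_3$, and compares $f$ with $f^0$ via monotonicity of Picard iterates rather than your continuity sandwich; both variants yield the same conclusion.
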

\begin{remark}
As will be clear from the proof, the equation is also ill-posed in the small data regime since the initial datum leading to uniqueness can be scaled.
\end{remark}

The strategy will be to show that second Picard iterate is unbounded. It suffices to show a lower bound for $\mathcal{C}^{234}_{21}[n^{01},n^{01},n^{01}]$ for $\omega_1>1$.

\subsection{Lower bound for $\mathcal{C}^{234}[n^{01},n^{01},n^{01}]$} 
Given $\omega_1>1$, in the domain $\mathcal{D}_{21}$ we have $\omega_4\approx \omega_1$, so for the corresponding inner integral we obtain
\begin{align*}
 \int_{\omega_1-\omega_3}^{\omega_1}   \frac{\omega_2 ^{\beta+\frac{1}{2}}  \omega_4 ^\beta}{\langle \omega_2 \rangle^{M/2} \l  \omega_4 \rangle^{M/2}} d\omega_4 
 &\approx \omega^{\beta-M/2}_1\int_0^{\omega_3} \frac{\omega_2^{\beta+\frac{1}{2}}}{\l \omega_2\r^{M/2}}\,d\omega_2\\
 &\gtrsim \frac{\omega^{\beta-M/2}_1}{\l \omega_3\r^{M/2}}\int_0^{\omega_3}\omega_2^{\beta+\frac{1}{2}}\,d\omega_2\\
 &\approx \omega_1^{\beta-M/2}\frac{\omega_3^{\beta+\frac{3}{2}}}{\l \omega_3\r^{M/2} }.
\end{align*}
Hence, we conclude
\begin{align}
\label{lower bound C^234}\mathcal{C}^{234}[n^{01},n^{01},n^{01}]\geq \mathcal{C}^{234}_{21}[n^{01},n^{01},n^{01}]
 \gtrsim  \omega_1^{2\beta-\frac{1}{2}-M/2}\int_0^{1/2}\frac{\omega_3^{2\beta+\frac{3}{2}}}{\l \omega_3\r^{M}}\,d\omega_3
 \approx \omega_1^{2\beta-\frac{1}{2}-M/2}.
\end{align}

Strictly speaking, the previous computation is sufficient to prove ill-posedness of the gain-only equation. We nonetheless compute the exact size of $\mathcal{C}^{234}[n^{01}]$ since it will  be useful in the treatment of the full equation, see Section \ref{illposed:full}.

\subsection{Upper bound for $\mathcal{C}^{234}[n^{01},n^{01},n^{01}]$ } For this we will provide an upper bound for each of $\mathcal{C}_{21}^{234}$, $\mathcal{C}_{22}^{234}$, $\mathcal{C}_{3}^{234}$,  $\mathcal{C}_{1}^{234}$ for $\omega_1>1$.

\begin{itemize}
    \item Upper bound for $\mathcal{C}_{21}^{234}[n^{01},n^{01},n^{01}]$:  Given $\omega_1>1$, in the domain $\mathcal{D}_{21}$ we have $\omega_2\leq\omega_3$ and $\omega_4\approx\omega_1$, therefore for the inner integral we have
\begin{align*}
\int_{\omega_1-\omega_3}^{\omega_1}\frac{\omega_2^{\beta+\frac{1}{2}}\omega_4^\beta}{\l\omega_2\r^{M/2}\l\omega_4\r^{M/2}}\,d\omega_4\lesssim\omega_1^{\beta-M/2}\omega_3^{\beta+\frac{1}{2}}\int_{\omega_3}^\infty\frac{1}{\l x\r^{M/2}}\,dx\lesssim  \omega_1^{\beta-M/2}\omega_3^{\beta+\frac{1}{2}}.   
\end{align*}
Hence, we obtain
\begin{align}\label{UB C_21^234}
\mathcal{C}_{21}^{234}[n^{01},n^{01},n^{01}]\lesssim \omega_1^{2\beta-\frac{1}{2}-M/2}\int_{0}^{\omega_1/2}\frac{1}{\l\omega_3\r^{M/2-\beta-\frac{1}{2}}}\,d\omega_3 \lesssim \omega_1^{2\beta-\frac{1}{2}-M/2}.   
\end{align}

\item Upper bound for $\mathcal{C}_{22}^{234}[n^{01},n^{01},n^{01}]$: Given $\omega_1>1$, in the domain $\mathcal{D}_{22}$ we have $\omega_3\approx\omega_4\approx\omega_1$, so for the inner integral we have
\begin{align*}
\int_{\omega_3}^{\omega_1}\frac{\omega_2^{\beta+\frac{1}{2}}\omega_4^\beta}{\l\omega_2\r^{M/2}\l\omega_4\r^{M/2}}\,d\omega_4&\lesssim \omega_1^{\beta-M/2}\int_{2\omega_3-\omega_1}^\infty\frac{1}{\l x\r^{M/2-\beta-\frac{1}{2}}} \,dx\lesssim \omega_1^{\beta-M/2}.  
\end{align*}
Hence, we obtain
\begin{align}\label{UB C_22^234}
\mathcal{C}_{22}^{234}[n^{01},n^{01},n^{01}]&\lesssim \omega_1^{2\beta-\frac{1}{2}-M/2}\int_{\omega_1/2}^\infty \frac{1}{\l\omega_3\r^{M/2-\beta}}\,d\omega_3 \lesssim \omega_1^{3\beta+\frac{1}{2}-M}.
\end{align}
\item Upper bound for $\mathcal{C}_{3}^{234}[n^{01},n^{01},n^{01}]$: Given $\omega_1>1$, in the domain $\mathcal{D}_3$ we have $\omega_2\leq\omega_4$ and $\omega_4\geq\omega_1$, thus 
\begin{align*}
\int_{\omega_1}^\infty\frac{\omega_2^\beta\omega_4^\beta}{\l\omega_2\r^{M/2}\l\omega_4\r^{M/2}}\,d\omega_4 &\lesssim  \int_{\omega_1}^\infty\omega_4^{2\beta-M/2}\,d\omega_4\lesssim\omega_1^{2\beta+1-M/2}.
\end{align*}
Thus
\begin{align}\label{UB C_3^234}
\mathcal{C}^{234}_3[n^{01},n^{01},n^{01}]\lesssim  \omega_1^{2\beta+\frac{1}{2}-M/2}\int_0^{\omega_1} \frac{1}{\l\omega_3\r^{M/2-\beta-\frac{1}{2}}}\,d\omega_3 \lesssim \omega_1^{2\beta+\frac{1}{2}-M/2}.
\end{align}
\item Upper bound for $\mathcal{C}_{1}^{234}$: Given $\omega_1>1$, in the domain $\mathcal{D}_1$ we have $\omega_2\geq\omega_1$, so
\begin{align*}
\int_{\omega_3}^\infty\frac{\omega_2^\beta\omega_4^\beta}{\l\omega_2\r^{M/2}\l\omega_4\r^{M/2}}\,d\omega_4  \lesssim \omega_1^{\beta-M/2}\int_{\omega_3}^\infty\omega_4^{\beta-M/2}\,d\omega_4  \lesssim \omega_1^{\beta-M/2}.
\end{align*}
Therefore, since $\omega_3\geq\omega_1>1$
\begin{align}\label{UB C_1^234}
 \mathcal{C}^{234}_1[n^{01},n^{01},n^{01}]\lesssim  \omega_1^{2\beta-M/2}\int_{\omega_1}^{\infty} \omega_3^{\beta-M/2}\,d\omega_3 \lesssim \omega_1^{3\beta+1-M}.
\end{align}
\end{itemize}

Combining \eqref{lower bound C^234} with \eqref{UB C_21^234}-\eqref{UB C_1^234}, we conclude 
\begin{equation}\label{size of C^234}
\mathcal{C}^{234}[n^{01},n^{01},n^{01}]\approx\omega_1^{2\beta-\frac{1}{2}-M/2},\quad\omega_1>1.    
\end{equation}

\subsection{Conclusion}
With the estimates above we are ready to finish the proof of Theorem \ref{thm:gain-only}. 

\begin{proof}
Assume for contradiction that the gain-only equation \eqref{DKWE} is well-posed in $\l k_1 \r^{-M} L^{\infty}_{k_1}$ for $M>6.$ Consider the initial datum $f^{01}(k_1):= (1+\vert k_1 \vert^4)^{-M/4}$ and let $f(t,k_1)$ the corresponding unique solution.  

By Lemma \ref{isotropic-gain}, $n(t,\omega_1)=f(t,k_1)$ satisfies 
\begin{align}\label{gain equation integral form}
    n(t) = n^{01} + \int_0^t \left(\mathcal{C}^{234}[n,n,n] + \mathcal{C}^{134}[n,n,n]\right)(s) \, ds,\quad t\in[0,T],
\end{align}
with initial datum $n^{01}(\omega_1):= \langle \omega_1 \rangle^{-M}$.
Since $n^{01}\geq 0$ and $\mathcal{C}^{234},\mathcal{C}^{134}$ are monotone, a straightforward induction shows that successive Picard iterates are all larger than $n^{01}.$ Since they converge to the unique solution $n$ to \eqref{gain IVP}, we deduce that $n\geq n^{01}$. 

Thus  \eqref{gain equation integral form} and \eqref{lower bound C^234} imply
\begin{align*}
    n(t) \geq \int_0^t \left(\mathcal{C}^{234}[n^{01},n^{01},n^{01}] + \mathcal{C}^{134}[n^{01},n^{01},n^{01}]\right) ds \geq t \,\mathcal{C}^{234}[n^{01},n^{01},n^{01}] \gtrsim t \,\omega_1^{2\beta - \frac{1}{2} - M/2},
\end{align*}
for $\omega_1>1$.

Now multiplying both sides of the above inequality by $\langle \omega_1 \rangle^{M/2}$ and letting $\omega_1 \to +\infty,$ we obtain 
 $$\Vert \langle \omega_1 \rangle^{M/2} n(t) \Vert_{L^\infty_{\omega_1}([0,+\infty))} = +\infty,$$
 which is a contradiction.

 We conclude that gain-only equation \eqref{DKWE} is ill-posed in $\l k_1\r^{-M}L^\infty_{k_1}$.
\end{proof}

\section{Ill-posedness for \eqref{DKWE} if $\beta>\frac{1}{4}$} \label{illposed:full}
In this section we prove that the threshold of well-posedness for \eqref{DKWE} is the same as the gain equation. We stress that the mechanism for ill-posedness is different since cancellations between the gain and loss terms make the picture less clear. However,  we show that high frequency oscillations counteract for this effect.
\begin{theorem} \label{main-thm}
If $\beta > \frac{1}{4},$ then the equation \eqref{DKWE} is ill-posed in $\langle k_1 \rangle^{-M} L^\infty_{k_1}$ for $M>10.$
\end{theorem}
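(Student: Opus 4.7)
The plan follows the blueprint of Theorem \ref{thm:gain-only} but requires a more delicate choice of initial datum to circumvent the cancellations between gain and loss noted in Remark \ref{beta34}. The strategy is to produce an isotropic datum $n^0 \in \langle \omega_1 \rangle^{-M/2} L^\infty_{\omega_1}$ whose image $\mathcal{C}[n^0]$ does not lie in the same space, and then derive a contradiction with the existence of any strong solution. The core obstruction specific to the full equation is that the integrand of $\mathcal{C}[n]$, written as $n_3 n_4(n_1 + n_2) - n_1 n_2(n_3 + n_4)$, vanishes identically at the critical configuration $(\omega_4, \omega_2) = (\omega_1, \omega_3)$ of the dominant region $\mathcal{D}_{21}$ (since there $n_1 = n_4$ and $n_2 = n_3$ hold pointwise). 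For the algebraic datum $n^{01}$ of Section \ref{gain:illposed}, the leading Taylor correction in $\delta := \omega_4 - \omega_1$ inherits a factor $\partial_\omega n^{01} \sim \omega_1^{-M/2 - 1}$, one power of $\omega_1^{-1}$ smaller than $n^{01}$ itself, which is precisely what keeps the resulting contribution in the target space for $\beta \leq 3/4$.

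The remedy is to introduce high-frequency oscillations so that the $\delta$-expansion no longer pays this $\omega_1^{-1}$ price. I would take
\begin{equation*}
n^0(\omega_1) := n^{01}(\omega_1)\bigl(1 + a\cos(\omega_1)\bigr), \qquad a \in (0, 1/2),
\end{equation*}
which is nonnegative and still belongs to $\langle \omega_1 \rangle^{-M/2} L^\infty_{\omega_1}$. Writing $n^0 = n^{01} + a\,g$ with $g(\omega) := n^{01}(\omega)\cos(\omega)$ and expanding
\begin{equation*}
\mathcal{C}[n^0,n^0,n^0] = \mathcal{C}[n^{01},n^{01},n^{01}] + a\bigl(\mathcal{C}[g,n^{01},n^{01}] + \mathcal{C}[n^{01},g,n^{01}] + \mathcal{C}[n^{01},n^{01},g]\bigr) + O(a^2),
\end{equation*}
the $a^0$ piece remains in the target space (by the analysis of Remark \ref{beta34}; the case $\beta > 3/4$ being already covered by the non-oscillatory argument). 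A direct computation of the $a$-linear piece shows that it too vanishes at $(\omega_4, \omega_2) = (\omega_1, \omega_3)$, but its first-order Taylor correction in $\delta$ contains a dominant term proportional to $-a\sin(\omega_1)\,\delta\,n^{01}(\omega_1)\bigl(n^{01}(\omega_3)\bigr)^2$, of size $\omega_1^{-M/2}$ rather than $\omega_1^{-M/2-1}$; the derivative now falls on the oscillatory factor instead of on $n^{01}$, gaining exactly one power of $\omega_1$.

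The steps I would execute are as follows. First, reduce to the isotropic setting by Lemma \ref{isotropic-full}. Second, decompose $\mathcal{C}[n^0]$ into the subregions $\mathcal{D}_{21}, \mathcal{D}_{22}, \mathcal{D}_3, \mathcal{D}_1$ of Section \ref{subsec:domain} and argue that the dominant contribution comes from $\mathcal{D}_{21}$, the other subregions being controlled by bounds analogous to Section \ref{gain:illposed}. Third, perform the Taylor expansion of the integrand on $\mathcal{D}_{21}$ around the critical configuration, extract the leading $a^1 \delta^1$ contribution (with the $\delta$-integral producing the factor $\omega_3^2$) and integrate against the cross section $\omega_1^{2\beta - 1/2}\omega_3^{2\beta + 1/2}$ over $\omega_3 \sim 1$, to obtain the pointwise lower bound
\begin{equation*}
\mathcal{C}[n^0](\omega_1) \gtrsim a\,|\sin(\omega_1)|\,\omega_1^{2\beta - 1/2 - M/2}
\end{equation*}
along the sequence $\omega_1 = \pi/2 + 2\pi k$ with $k$ large, so that $\langle \omega_1 \rangle^{M/2}\mathcal{C}[n^0](\omega_1) \to \infty$ when $\beta > 1/4$. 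Fourth, close the contradiction: assuming a strong solution $n \in \mathcal{C}([0,T]; \langle \omega_1 \rangle^{-M/2} L^\infty_{\omega_1})$ exists, continuity of $n$ in the weighted norm and the Duhamel identity imply, for each fixed $\omega_1$, the pointwise convergence $\frac{n(t,\omega_1) - n^0(\omega_1)}{t} \to \mathcal{C}[n^0](\omega_1)$ as $t \to 0^+$; hence for each small $t > 0$ the quantity $\langle \omega_1 \rangle^{M/2}\bigl(n(t,\omega_1) - n^0(\omega_1)\bigr)$ would have to exceed $c\,t\,\omega_1^{2\beta - 1/2}$ along the above sequence, contradicting the assumption $n(t) - n^0 \in \langle \omega_1 \rangle^{-M/2}L^\infty_{\omega_1}$.

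The main anticipated obstacle is the third step, the careful Taylor bookkeeping on $\mathcal{D}_{21}$: one must verify that the $a^0\delta^1$, $a^0 \delta^2$, $a^1 \delta^2$, and $a^2$ corrections all contribute strictly subleading terms, and that the other subregions $\mathcal{D}_{22}, \mathcal{D}_3, \mathcal{D}_1$ cannot cancel the $\mathcal{D}_{21}$ contribution. The strengthening to $M > 10$ (versus $M > 6$ in Theorem \ref{thm:gain-only}) likely reflects the additional integrability margin needed to control these remainders uniformly in $\omega_1$, now that the monotonicity shortcut that was available for the gain-only equation is no longer at our disposal.
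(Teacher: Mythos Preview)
Your proposal takes a genuinely different route from the paper. The paper's datum is $n^0 = (A + \cos(N\omega_1))\langle\omega_1\rangle^{-M/2}$ with \emph{large} offset $A$ and \emph{large} frequency $N$, not a small perturbation at unit frequency. Crucially, the paper does not Taylor expand in $\delta$: it algebraically rewrites the integrand so that the factor $\cos(N\omega_1)$ can be pulled outside the integral, yielding the main term $I_1 \approx A^2|\cos(N\omega_1)|\,\omega_1^{2\beta-1/2-M/2}$, while the residual integral containing $\cos(N\omega_4)$ is beaten by a single integration by parts in $\omega_4$ (gaining $1/N$). The two large parameters are then used hierarchically: $A$ large suppresses the $O(A)$ pieces, $N$ large suppresses the oscillatory remainder, and finally $\omega_1$ large suppresses the $\mathcal{C}'^{234}$ and $\mathcal{C}^{134},\mathcal{C}^{124}$ terms (this last step is where $M>10$ enters).

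Your scheme is plausible in spirit---the insight that an oscillatory factor makes $\partial_\omega n^0 \sim n^0$ rather than $\omega^{-1} n^0$ is exactly what the paper exploits---but the Taylor step as you describe it has a concrete gap. On $\mathcal{D}_{21}$ the variable $\delta = \omega_4 - \omega_1$ ranges over $[-\omega_3, 0]$, so $|\delta|$ is of order $\omega_3$, not a small parameter. With unit-frequency oscillation, the $a^1\delta^2$ correction (from $g''(\omega_1) \sim -n^{01}(\omega_1)\cos\omega_1$) is generically the \emph{same} size as your claimed leading $a^1\delta^1$ term, and $p_2 = \langle\omega_3+\delta\rangle^{-M/2}$ is not close to $p_3$ across the full $\delta$-range, so the expansion does not isolate a dominant contribution. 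Your chosen sequence $\omega_1 = \pi/2+2\pi k$ happens to kill $\cos(\omega_1)$ and may rescue the argument there, but this must be seen by computing the $\delta$-integral directly (and likewise for $\mathcal{D}_3$, which shares the critical boundary $\omega_4=\omega_1$), not by truncating a Taylor series. The paper's integration-by-parts mechanism sidesteps this issue entirely, which is what the extra parameter $N$ buys.
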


\begin{remark}
As will be clear from the proof, this statement is true regardless of the size of the initial datum. Put differently, well-posedness cannot be recovered by imposing a size condition on the initial datum.
\end{remark}

\begin{remark}
The proof also applies to the inhomogeneous equation (recall Remark \ref{rmk-inhomo}).
\end{remark}

\begin{remark}
We have not tried to optimize the condition $M>10$ since our main point is that well-posedness cannot be recovered by considering smoother initial datum.
\end{remark}

\subsection{Set-up}
To prove Theorem \ref{main-thm} we argue by contradiction: assume that \eqref{DKWE} is well-posed in the sense of \ref{definition of a solution}.

Let $A>0, N\in \mathbb{N}$ be large constants to be fixed later. Consider the initial datum
\begin{align*}
    f^{0}(k_1) := \frac{A + \cos(N \vert k_1 \vert^2)}{(1+\vert k_1 \vert^4)^{M/4}} \in \langle k_1 \rangle^{-M} L^\infty_{k_1}.
\end{align*}
Then by Lemma \ref{isotropic-full}, $n(t,\omega_1) =f(t,k_1)$ satisfies 
$$n(t)=n^{01}+\int_0^t \mathcal{C}[n](s)\,ds,\quad t\in[0,T],$$
with initial datum
\begin{align*}
n^0(\omega_1) := A n^{01}(\omega_1) + n^{02}(\omega_1)  , \ \ n^{01}(\omega_1) = 
\frac{1}{\langle \omega_1 \rangle^{M/2}}, \ \   n^{02}(\omega_1) = 
\frac{\cos(N \omega_1)}{\langle \omega_1 \rangle^{M/2}}.
\end{align*}

We isolate the main contribution in $\mathcal{C}$ making the following elementary manipulations:
\begin{align*}
& n^0_2 n^0_3 (n^0_4 - n^0_1)\nonumber  \\
=&  \big(A + \cos(N \omega_2) \big) \big(A + \cos(N \omega_3) \big) \bigg(  \frac{\cos(N \omega_4) - \cos(N \omega_1)}{\langle \omega_2 \rangle^{M/2} \langle \omega_3 \rangle^{M/2} \langle \omega_4 \rangle^{M/2}} + \frac{A (\langle \omega_1 \rangle^{M/2} - \langle \omega_4 \rangle^{M/2}) }{\langle \omega_1 \rangle^{M/2} \langle \omega_2 \rangle^{M/2} \langle \omega_3 \rangle^{M/2} \langle \omega_4 \rangle^{M/2}} \bigg)\nonumber \\
=& -A^2 \frac{\cos(N \omega_1)}{\langle \omega_2 \rangle^{M/2} \langle \omega_3 \rangle^{M/2} \langle \omega_4 \rangle^{M/2}} + A^2 \frac{\cos(N \omega_4)}{\langle \omega_2 \rangle^{M/2} \langle \omega_3 \rangle^{M/2} \langle \omega_4 \rangle^{M/2}} \nonumber   \\
& +  \big(A + \cos(N \omega_2) \big) \cos(N \omega_3)   \frac{\cos(N \omega_4) - \cos(N \omega_1)}{\langle \omega_2 \rangle^{M/2} \langle \omega_3 \rangle^{M/2} \langle \omega_4 \rangle^{M/2}}\\
&+   \big(A + \cos(N \omega_3) \big) \cos(N \omega_2)   \frac{\cos(N \omega_4) - \cos(N \omega_1)}{\langle \omega_2 \rangle^{M/2} \langle \omega_3 \rangle^{M/2} \langle \omega_4 \rangle^{M/2}} \nonumber  \\
&+ A \big(A + \cos(N \omega_2) \big) \big(A + \cos(N \omega_3) \big)  \frac{\langle \omega_1 \rangle^{M/2} - \langle \omega_4 \rangle^{M/2}}{\langle \omega_1 \rangle^{M/2} \langle \omega_2 \rangle^{M/2} \langle \omega_3 \rangle^{M/2} \langle \omega_4 \rangle^{M/2}}. 
\end{align*}
Next we add $n_1^0 n_4^0 (n_3^{0} - n_2^{0})$ on both sides of the above equation, multiply  by $S$ (defined in \eqref{iso-DKWE}) and integrate over $\{ (\omega_3,\omega_4) \in [0,\infty)^2\,: \omega_3 \leq \omega_4 \}$. Using the fact that $0\leq n^0\leq (A+1)n^{01}$, we obtain the lower bound
\begin{align*}
|\mathcal{C}[n^0,n^0,n^0]|&\geq A^2|\cos(N\omega_1)|\,\mathcal{C}^{234}[n^{01},n^{01},n^{01}]-A^2|\mathcal{C}^{234}[n^{01},n^{01},n^{02}]|-4(A+1)\mathcal{C}^{234}[n^{01},n^{01},n^{01}]\nonumber\\
&\quad-A(A+1)^2\mathcal{C}^{'234}[n^{01},n^{01},n^{01}]-(A+1)^3\mathcal{C}^{124}[n^{01},n^{01},n^{01}] - (A+1)^3 \mathcal{C}^{134}[n^{01},n^{01},n^{01}],
\end{align*}
where $\mathcal{C}^{'234}$ is defined as
\begin{equation}\label{nu definition}
\mathcal{C}^{'234}[n^{01},n^{01},n^{01}]=\omega_1^{\beta-\frac{1}{2}}\int_{0\le\omega_3\leq\omega_4,\,\omega_2\geq 0} S(\omega_1,\omega_2,\omega_3,\omega_4)\frac{\big|\l\omega_1\r^{M/2}-\l\omega_4\r^{M/2}\big|}{\l\omega_1\r^{M/2}\l\omega_2\r^{M/2}\l\omega_3\r^{M/2}\l\omega_4\r^{M/2}}\,d\omega_3\,d\omega_4,   
\end{equation}
and $S(\omega_1,\omega_2,\omega_3,\omega_4)$ is given in \eqref{iso collisional operator}. It corresponds to the operator $\mathcal{C}^{234}$ with the cross-section $S(\omega_1,\omega_2,\omega_3,\omega_4)\frac{|\l\omega_1\r^{M/2}-\l\omega_4\r^{M/2}|}{\l\omega_1\r^{M/2}}.$

Isolating the expected main contribution, we have 
\begin{align} \label{C-lowerbound}
\vert \mathcal{C}[n^0,n^0,n^0] \vert & \geq I_1 - I_2 - I_3 - I_4 - I_5 - I_6 ,
\end{align}
where 
\begin{align*}
I_1 & :=A^2  |\cos (N \omega_1)|\, \mathcal{C}^{234} [n^{01},n^{01},n^{01}], \\
I_2 & := A^2 \big( \vert \mathcal{C}^{234}_{21}[n^{01},n^{01},n^{02}] \vert + \vert \mathcal{C}^{234}_3[n^{01},n^{01},n^{02}]  \vert \big), \\
I_3 & := 4(A+1) \mathcal{C}^{234}[n^{01},n^{01},n^{01}], \\
 I_4&:= A (A+1)^2 \mathcal{C}'^{234}[n^{01},n^{01},n^{01}], \\
   I_5& := (A+1)^3 \left(\mathcal{C}^{124},[n^{01},n^{01},n^{01}]+\mathcal{C}^{134}[n^{01},n^{01},n^{01}]\right), \\
    I_6&:= A^2 \big( \mathcal{C}^{234}_{22}[n^{01},n^{01},n^{01}] +  \mathcal{C}^{234}_{1}[n^{01},n^{01},n^{01}] \big).
\end{align*}
We now bound all the terms above for $\omega_1>1$. 

\subsection{Estimating $I_1,I_3,I_6$} \label{I1I3}
By \eqref{size of C^234},  we have
\begin{align}
I_1 &\approx A^2 \vert \cos(N \omega_1) \vert \omega_1^{2\beta-\frac{1}{2}-M/2}\label{estimate I_1},\\
I_3 &\approx (A+1) \omega_1^{2\beta - M/2 - \frac{1}{2}}\label{estimate I_3}.
\end{align}
Moreover by \eqref{UB C_22^234}, \eqref{UB C_1^234} we also have
\begin{align}\label{estimate I_6}
    I_6 \lesssim A^2 \omega_1^{3\beta+1-M}.
\end{align}
The above implicit numerical constants are independent of $N,A.$

\subsection{Estimating $I_2$} \label{I2}
 In this case we perform one integration by parts in $\omega_4$ and then estimate from above. We find 

%keeping only one boundary term on the right-hand side
\begin{align*}
    \vert \mathcal{C}^{234}_{21}[n^{01},n^{01},n^{02}] \vert & \lesssim \frac{1}{N} \omega_1^{2\beta-\frac{1}{2}-M/2} \int_{0}^{\omega_1/2} \frac{\omega_3^{2\beta}}{\l\omega_3\r^{M/2}} d\omega_3 \\
    &+ \frac{1}{N} \omega_1^{\beta-\frac{1}{2}}  \int_{0}^{\omega_1/2} \frac{\omega_3^\beta}{\langle \omega_3 \rangle^{M/2}} \int_{\omega_1-\omega_3}^{\omega_1}  \bigg \vert\frac{\partial}{\partial\omega_4} \bigg( \frac{\omega_2^{\beta+1/2}}{\langle \omega_2 \rangle^{M/2}} \frac{\omega_4^\beta}{\langle \omega_4 \rangle^{M/2}} \bigg) \bigg \vert d\omega_4 d\omega_3.
\end{align*}
The boundary term is clearly bounded by $\frac{1}{N}\omega_1^{2\beta-\frac{1}{2}-M/2}$. For the other term, in the domain under $\mathcal{D}_{21}$ we have
\begin{align*}
    \bigg \vert \frac{\partial}{\partial \omega_4} \bigg( \frac{\omega_2^{\beta+1/2}}{\langle \omega_2 \rangle^{M/2}} \frac{\omega_4^\beta}{\langle \omega_4 \rangle^{M/2}} \bigg) \bigg \vert &\lesssim \frac{\omega_4^\beta}{\l\omega_4\r^{M/2} {\langle \omega_2 \rangle^{M/2}}}\left(\omega_2^{\beta-\frac{1}{2}}+\omega_2^{\beta+\frac{1}{2}}\l\omega_2\r^{-2}\right)\\
    &\hspace{2cm}+\frac{\omega_2^{\beta+\frac{1}{2}}}{\l\omega_2\r^{M/2} {\langle \omega_4 \rangle^{M/2}}}\left(\omega_4^{\beta-1}+\omega_4^{\beta}\l\omega_4\r^{-2}\right) \\
    & \lesssim \frac{\omega_4^\beta}{\l\omega_4\r^{M/2} {\langle \omega_2 \rangle^{M/2}}}\left(\omega_2^{\beta-\frac{1}{2}}+\omega_2^{\beta+\frac{1}{2}}\right),
\end{align*}
where we used $\omega_4\geq\omega_1-\omega_3\geq\omega_1/2>1/2$ for the last line.

We deduce the bound
\begin{align*}
    \int_{\omega_1-\omega_3}^{\omega_1}  \bigg \vert \frac{\partial}{\partial\omega_4} \bigg( \frac{\omega_2^{\beta+1/2}}{\langle \omega_2 \rangle^{M/2}} \frac{\omega_4^\beta}{\langle \omega_4 \rangle^{M/2}} \bigg) \bigg \vert d\omega_4 &\lesssim \omega_1^{\beta-M/2} \bigg( \int_{0}^{1} \omega_2^{\beta-\frac{1}{2}} d\omega_2 + \int_{1}^{\omega_3} \omega_2^{\beta+\frac{1}{2}-M/2} d\omega_2 \bigg) \\
    &\lesssim \omega_1^{\beta-M/2}.
\end{align*}
Therefore 
\begin{align*}
     \vert \mathcal{C}^{234}_{21}[n^{01},n^{01},n^{02}] \vert & \lesssim \frac{1}{N}\omega_1^{2\beta-\frac{1}{2}-M/2} + \frac{1}{N}\omega_1^{2\beta-\frac{1}{2}-M/2}\int_0^{\omega_1/2}\left(\frac{\omega_3^{2\beta}}{\l\omega_3\r^{M}}+\frac{\omega_3^\beta}{\l\omega_3\r^{M/2}}\right) \, d\omega_3 \\
     & \lesssim\frac{1}{N} \omega_1^{2\beta - M/2 -\frac{1}{2}},
\end{align*}
where the implicit (numerical) constant does not depend on $N$ or $A.$

A similar reasoning shows that the same estimate holds for $\vert \mathcal{C}^{234}_{3}[n^{01},n^{01},n^{02}] \vert.$

We conclude
\begin{align}\label{estimate I_2}
     I_2 \lesssim \frac{A^2}{N} \omega_1^{2\beta - M/2 -\frac{1}{2}}.
\end{align}

\subsection{Estimating $I_4$} \label{I4}
In the domain $\mathcal{D}_{21}$, the basic inequality
\begin{align*}
    \big \vert \langle \omega_4 \rangle^{M/2} - \langle \omega_1 \rangle^{M/2} \big \vert \lesssim \omega_1^{M/2-1} (\omega_1-\omega_4)=\omega_1^{M/2-1}(\omega_3-\omega_2),
\end{align*}
holds since $0 \leq \omega_4 \leq \omega_1$ and $\omega_1>1$.

Thus we can bound
\begin{align*}
    \mathcal{C}^{'234}_{21}[n^{01},n^{01},n^{01}] & \lesssim \omega_1^{\beta-3/2}   \int_{0}^{\omega_1/2} \frac{\omega_3^\beta}{\langle \omega_3 \rangle^{M/2}}  \int_{\omega_1 - \omega_3}^{\omega_1}  \frac{\omega_2^{\beta+1/2} \omega_4^\beta (\omega_3-\omega_2)}{\langle \omega_2 \rangle^{M/2} \langle \omega_4 \rangle^{M/2}} d\omega_4 d\omega_3.
\end{align*}
Similarly as for the estimate \eqref{UB C_21^234} above we use the fact that $\omega_2\leq\omega_3$ and $\omega_4\approx\omega_1$ in $\mathcal{D}_{21}$ to obtain
\begin{align*}
    \int_{\omega_1-\omega_3}^{\omega_1} \frac{\omega_2^{\beta+\frac{1}{2}} \omega_4^\beta(\omega_3 - \omega_2)}{\l \omega_2 \r^{M/2} \l \omega_4\r^{M/2}}  d\omega_4 \lesssim \omega_1^{\beta-M/2} \omega_3^{\beta+\frac{1}{2}} \int_{0}^{\omega_3} \frac{\omega_3-\omega_2}{\l \omega_2 \r^{M/2}} d\omega_2\lesssim \omega_1^{\beta-M/2}\omega_3^{\beta+\frac{3}{2}}.
\end{align*}
We conclude
\begin{align*}
     \mathcal{C}^{'234}_{21}[n^{01},n^{01},n^{01}] & \lesssim \omega_1^{2\beta - \frac{3}{2}-M/2} \int_{0}^{\omega_1/2} \frac{\omega_3^{2\beta+\frac{3}{2}}}{\l \omega_3 \r^{M/2}} d\omega_3 \lesssim \omega_1^{2\beta - \frac{3}{2}-M/2}.
\end{align*}

Next we estimate $\mathcal{C}^{234}_{3}$ similarly. On the domain $\mathcal{D}_3$ we have 
$$\big \vert \langle \omega_4 \rangle^{M/2} - \langle \omega_1 \rangle^{M/2} \big \vert \lesssim \omega_4^{M/2-1} \vert \omega_4 - \omega_1 \vert=\omega_4^{M/2-1}(\omega_2-\omega_3).$$ 
As a result
\begin{align*}
\mathcal{C}^{'234}_{3}[n^{01},n^{01},n^{01}] & \lesssim \omega_1^{\beta-\frac{1}{2}-M/2}   \int_{0}^{\omega_1} \frac{\omega_3^\beta}{\langle \omega_3 \rangle^{M/2}}  \int_{\omega_1}^{\infty}  \frac{\omega_2^{\beta+\frac{1}{2}} \omega_4^\beta (\omega_2-\omega_3)}{\langle \omega_2 \rangle^{M/2} \langle \omega_4 \rangle} \,d\omega_4\,d\omega_3\\
& \lesssim \omega_1^{2\beta-M/2-\frac{3}{2}}  \int_{0}^{\omega_1} \frac{\omega_3^\beta}{\langle \omega_3 \rangle^{M/2}} \int_{\omega_1}^{\infty}  \frac{\omega_2^{\beta+\frac{3}{2}} }{\langle \omega_2 \rangle^{M/2}} \,d\omega_4 \,d\omega_3 \\
& \lesssim \omega_1^{2\beta-M/2-\frac{3}{2}}  \int_{0}^{\omega_1} \frac{\omega_3^\beta}{\langle \omega_3 \rangle^{M/2}} \int_{\omega_3}^{\infty}  \frac{1}{\langle x \rangle^{M/2-\beta-\frac{3}{2}}} \,dx \,d\omega_3 \\
&\lesssim \omega_1^{2\beta-M/2-\frac{3}{2}} .
\end{align*}

A cruder bound as in \eqref{UB C_22^234}, \eqref{UB C_1^234} suffices   for $\mathcal{C}^{'234}_{22}$ and $\mathcal{C}^{'234}_{1}$ so we omit the proof. 

We conclude that
\begin{align} \label{size of C'234}
\vert \mathcal{C}^{'234}[n^{01},n^{01},n^{01}] \vert \lesssim  \omega_1^{2 \beta - 3/2-M/2},
\end{align}
and consequently
\begin{align} \label{estimate I_4}
    I_4 & \lesssim A^3 \omega_1^{2 \beta - 3/2-M/2}.
\end{align}

\subsection{Estimating $I_5$} As we will see this is a lower order term.  

\begin{itemize}
    \item Estimate for $\mathcal{C}^{124}_{21}[n^{01},n^{01},n^{01}]$, $\mathcal{C}^{134}_{21}[n^{01},n^{01},n^{01}]$: In $\mathcal{D}_{21}$ we have $\omega_4\approx\omega_1$ and $\omega_2\leq\omega_1$, thus
\begin{align*}
\int_{\omega_1-\omega_3}^{\omega_1}\frac{\omega_2^{\beta+\frac{1}{2}}\omega_4^\beta}{\l\omega_2\r^{M/2} \l\omega_4\r^{M/2}}\,d\omega_4 &\lesssim \omega_1^{2\beta+\frac{1}{2}-M/2}\int_{0}^\infty \frac{1}{\l x\r^{M/2}}\,dx \lesssim \omega_1^{2\beta+\frac{1}{2}-M/2}.   
\end{align*}
Hence, we obtain
\begin{align}\label{est C_21^124}
\mathcal{C}_{21}^{124}[n^{01},n^{01},n^{01}]\lesssim \omega_1^{3\beta-M}\int_0^{\omega_1/2}\omega_3^{\beta}\,d\omega_3\lesssim \omega_1^{4\beta+1-M}.    
\end{align}
In $\mathcal{D}_{21}$, we also have $\omega_2\leq\omega_3$, thus
\begin{align}\label{est C_21^134}
 \mathcal{C}_{21}^{134}[n^{01},n^{01},n^{01}]\lesssim \mathcal{C}_{21}^{124}[n^{01},n^{01},n^{01}]\lesssim \omega_1^{4\beta+1-M}.   
\end{align}
\item Estimate for $\mathcal{C}^{124}_{22}[n^{01},n^{01},n^{01}]$, $\mathcal{C}^{134}_{21}[n^{01},n^{01},n^{01}]$: In $\mathcal{D}_{22}$ we also have $\omega_4\approx\omega_1$ and $\omega_2\leq\omega_1$,  thus
\begin{align*}
\int_{\omega_3}^{\omega_1}\frac{\omega_2^{\beta+\frac{1}{2}}\omega_4^\beta}{\l\omega_2\r^{M/2}\l\omega_4\r^{M/2}}\,d\omega_4&\lesssim \omega_1^{2\beta+\frac{1}{2}-M/2}\int_{2\omega_3-\omega_1}^\infty\frac{1}{\l x\r^{M/2}}\,d x\lesssim \omega_1^{2\beta+\frac{1}{2}-M/2}.    
\end{align*}
Hence, we obtain
\begin{align}\label{est C_22^124}
 \mathcal{C}^{124}_{22}[n^{01},n^{01},n^{01}]&\leq \omega_1^{3\beta-M}\int_{\omega_1/2}^{\omega_1}\omega_3^\beta\,d\omega_3 \lesssim \omega_1^{4\beta+1-M}.  
\end{align}
In $\mathcal{D}_{22}$, we also have $\omega_2\leq\omega_3$, thus
\begin{align}\label{est C_22^134}
 \mathcal{C}_{22}^{134}[n^{01},n^{01},n^{01}]\lesssim \mathcal{C}_{22}^{124}[n^{01},n^{01},n^{01}]\lesssim \omega_1^{4\beta+1-M}.   
\end{align}

\item Estimate for $\mathcal{C}^{124}_{3}[n^{01},n^{01},n^{01}]$, $\mathcal{C}^{134}_{3}[n^{01},n^{01},n^{01}]$: In $\mathcal{D}_3$ we have $\omega_2\leq\omega_4$ and $\omega_4\geq\omega_1>1$, which imply
\begin{align*}
    \int_{\omega_1}^\infty \frac{\omega_2^\beta\omega_4^\beta}{\l\omega_4\r^{M/2}}\,d\omega_4\lesssim \int_{\omega_1}^\infty \omega_4^{2\beta-M/2}\,d\omega_4\approx \omega_1^{2\beta+1-M/2}. 
\end{align*}
Hence, we obtain 
\begin{align}\label{est C_3^134}
\mathcal{C}^{134}_3[n^{01},n^{01},n^{01}]\lesssim \omega_1^{3\beta+\frac{1}{2}-M}\int_{0}^{\omega_1}\frac{\omega_3^{\beta+\frac{1}{2}}}{\l\omega_3\r^{M/2}}\,d\omega_1\lesssim \omega_1^{3\beta+\frac{1}{2}-M}.    
\end{align}
In $\mathcal{D}_{3}$, we also have $\omega_3\leq\omega_2$, thus
\begin{align}\label{est C_3^134}
 \mathcal{C}_{3}^{124}[n^{01},n^{01},n^{01}]\lesssim \mathcal{C}_{3}^{134}[n^{01},n^{01},n^{01}]\lesssim \omega_1^{3\beta+\frac{1}{2}-M}.   
\end{align}
\item Estimate for $\mathcal{C}^{124}_{1}[n^{01},n^{01},n^{01}]$, $\mathcal{C}^{134}_{1}[n^{01},n^{01},n^{01}]$:
In $\mathcal{D}_{1}$, we have
\begin{align*}
\int_{\omega_3}^\infty \frac{\omega_2^\beta\omega_4^\beta}{\l\omega_4\r^{M/2}}\,d\omega_4\leq\int_{\omega_3}^\infty \frac{(\omega_4+\omega_3)^\beta\omega_4^\beta}{\l\omega_4\r^{M/2}}\,d\omega_4\   \lesssim \int_{\omega_3}^\infty\omega_4^{2\beta-M/2}\,d\omega_4\lesssim \omega_3^{2\beta+1-M/2}
\end{align*}
Hence
\begin{align}\label{est C_1^134}
\mathcal{C}_{1}^{134}[n^{01},n^{01},n^{01}]\lesssim \omega_1^{\beta-M/2}\int_{\omega_1}^\infty\omega_3^{2\beta+1-M/2}\,d\omega_3\lesssim \omega_1^{3\beta+2-M}.     
\end{align}
In $\mathcal{D}_1$ we also have $\omega_2\geq\omega_3$, thus
\begin{equation}\label{est C_1^124}
  \mathcal{C}_{1}^{124}[n^{01},n^{01},n^{01}]\lesssim  \mathcal{C}_{1}^{134}[n^{01},n^{01},n^{01}]  \lesssim \omega_1^{3\beta+2-M}
\end{equation}
\end{itemize}

Combining \eqref{est C_21^124}-\eqref{est C_1^124}, for $\omega_1>1$ we obtain
\begin{align} \label{size of C^124, C^134}
     \mathcal{C}_{1}^{124}[n^{01},n^{01},n^{01}],  \mathcal{C}_{1}^{134}[n^{01},n^{01},n^{01}] \lesssim \omega_1^{4\beta+2-M},
\end{align} 
and conclude
\begin{align}\label{estimate I_5}
 I_5\lesssim A^3 \omega_1^{4\beta+2-M}.  
\end{align}

\subsection{Conclusion of the proof}
We start by proving that
\begin{equation}\label{size of C^123}
\mathcal{C}^{123}[n^{01},n^{01},n^{01}]\lesssim\omega_1^{2\beta-\frac{1}{2}-M/2}.    
\end{equation}
Indeed we can write that 
\begin{align*}
n_1^{01} n_2^{01} n_3^{01} = n_2^{01} n_3^{01} n_4^{01} +  n_2^{01} n_3^{01} (n_1^{01}-n_4^{01}) = n_2^{01} n_3^{01} n_4^{01} + \frac{\langle \omega_4 \rangle^{M/2} - \langle \omega_1 \rangle^{M/2}}{\langle \omega_1 \rangle^{M/2} \langle \omega_2 \rangle^{M/2} \langle \omega_3 \rangle^{M/2} \langle \omega_4 \rangle^{M/2}} .
\end{align*}
After multiplying by $S$ and integrating over $\lbrace (\omega_3,\omega_4) \in [0;\infty)^2; \omega_3\leq \omega_4 \rbrace$ we find
\begin{align*}
    \mathcal{C}^{123}[n^{01},n^{01},n^{01}] \lesssim \mathcal{C}^{234}[n^{01},n^{01},n^{01}] + \vert \mathcal{C}^{'234}[n^{01},n^{01},n^{01}] \vert
\end{align*}
and \eqref{size of C^123} follows from \eqref{size of C^234} and \eqref{size of C'234}.

We conclude that by \eqref{size of C^123}, \eqref{size of C^234}, \eqref{size of C^124, C^134}, we have 
\begin{align} \label{C01-upperbound}
    \vert \mathcal{C}[n^{01},n^{01},n^{01}] \vert \lesssim \omega_1^{2\beta-\frac{1}{2}-M/2}. 
\end{align}

Next, let $C_i, i=1...6$ denote the numerical constants in the estimates of $I_i$ above \eqref{estimate I_1}, \eqref{estimate I_3}, \eqref{estimate I_6}, \eqref{estimate I_2}, \eqref{estimate I_4} and \eqref{estimate I_5}.  

Then by \eqref{C-lowerbound} we have 
\begin{align*}
  \big \vert  \mathcal{C}[n^0,n^0,n^0] \big \vert & \geq \omega_1^{2\beta-M/2-1/2} \bigg(A^2 C_1 \vert \cos(N \omega_1) \vert 
  - \frac{A^2 C_2}{N} 
  - (A+1) C_3 \\ 
  &- \frac{A^3}{\omega_1} C_4 - A^3 C_5  \omega_1^{2 \beta + \frac{5}{2} - M/2} - A^2 C_6 \omega_1^{\beta + \frac{3}{2}-M/2}  \bigg).
\end{align*}

Now we take $N$ such that $C_2/N < C_1/10,$ and $A$ such that $(A+1) C_3 < 1/10 A^2 C_1.$ Next write $\omega_1 = \frac{2\pi B}{N}$ and choose $B$ as an integer such that $\omega_1>10$ and $ \frac{N}{2 \pi B} (A^3 C_4 + A^3 C_5 + A^2 C_6) < \frac{A^2 C_1}{10}$, we can conclude (using that since $M>10$ and $\beta \in [0,1]$ we have $2\beta+5/2-M/2 < -1$)
\begin{align} \label{goodlowerbound}
  \langle \omega_1 \rangle^{M/2} \big \vert \mathcal{C}[n^0,n^0,n^0] \big \vert \geq \frac{A^2 C_1}{2} \omega_1^{2\beta-\frac{1}{2}}.
\end{align}

Next, note that by Definition \ref{definition of a solution} we have $n \in \mathcal{C}([0,T];\langle k \rangle^M L^\infty_k)$. Then for $0<\varepsilon<1$ to be fixed later, there exists $\eta>0$ such that for all $0 \leq t \leq \eta$, we have $\big \Vert \l \omega \r^{M/2} \big(n(t) -n^0 \big) \big \Vert_{L^\infty_\omega} < \varepsilon.$  

Then 
\begin{align*}
     n(t) -n^0  = \int_0^t \mathcal{C}[n,n,n] \, ds = t \mathcal{C}[n^0,n^0,n^0] + \int_0^t \mathcal{C}[n,n,n] - \mathcal{C}[n^0,n^0,n^0] \, ds.
\end{align*}
Then by multilinearity of $\mathcal{C}$ and \eqref{C01-upperbound}, we have 
\begin{align} 
\notag \big \vert \mathcal{C}[n,n,n] - \mathcal{C}[n^0,n^0,n^0] \big \vert & \leq \sum_{(f,g,h) \in \lbrace n-n^0, n^0 \rbrace, (f,g,h) \neq (n^0,n^0,n^0)} \vert \mathcal{C}[f,g,h] \vert \\
\notag & \lesssim \varepsilon (A+1)^3 \mathcal{C}[n^{01},n^{01},n^{01}] \\
\label{Cdiff} & \lesssim \varepsilon (A+1)^3 \omega_1^{2\beta-\frac{1}{2}-M/2} .
\end{align}
Hence, denoting $C_7$ the numerical constant in \eqref{Cdiff} above, we find for the parameters $A,B$ and $N$ defined above by \eqref{goodlowerbound} that 
\begin{align*}
    \vert n(t) \vert \geq t  \omega_1^{2\beta - M/2 - \frac{1}{2}} \big(\frac{A^2 C_1}{2} - C_7 \varepsilon(A+1)^3\big) \geq t \omega_1^{2\beta - M/2 - \frac{1}{2}} \frac{A^2 C_1}{4},
\end{align*}
where for the last inequality we took $\varepsilon$ small enough (e.g. smaller than $\frac{4 A^2 C_1}{(A+1)^3 C_7}$).

Now multiplying both sides of the above inequality by $\langle \omega_1 \rangle^{M/2}$ and letting $\omega_1 \to +\infty,$ we obtain 
 $$\Vert \langle \omega_1 \rangle^{M/2} n(t) \Vert_{L^\infty_{\omega_1}([0,+\infty))} = +\infty,$$
 which is a contradiction.

We conclude that \eqref{DKWE} is ill-posed in $\l k_1\r^{-M}L^\infty_{k_1}$.

\appendix

\section{Alternative proof of Theorem \ref{thm:gain-only}}
In this section we give a second proof of Theorem \ref{thm:gain-only} which does not rely on the isotropic formulation, rather closer in spirit to \cite{AmLe}.

\begin{proof}
Assume for contradiction that the equation is well posed in $\l k_1 \r^{-M} L^{\infty}_{k_1}.$ Let $f(t)$ denote the solution on $[0,T]$ with initial datum $f^0(k_1) = \langle k_1 \rangle^{-M}.$ Recall that $f(t,k_1) \geq f^0(k_1)$ for all $t \in [0,T].$  

Then write for $\vert k_1 \vert \geq 10$
\begin{align} \label{fTlowerbound}
\begin{split}
    f(T) & = f^0(k_1) + \int_0^T \mathcal{G}[f](t) \, dt \geq \int_0^T \mathcal{G}[f^0](t) \, dt \\
    & \geq \int_0^T \langle k_1 \rangle^{-M} \int_{\S^2} \int_{1 \leq \vert k_2 \vert \leq 2} \vert k_1-k_2 \vert \big( \vert k_1 \vert \vert k_2 \vert \vert k_1^* \vert \vert k_2^* \vert \big)^{2\beta} \langle k_2 \rangle^{-M} \langle k_2^* \rangle^{-M} \, d\sigma dk_2 dt \\
     &\gtrsim T \langle k_1 \rangle^{-M} \int_{1 \leq \vert k_2 \vert \leq 2}  \int_{\S^2} \vert k_1 \vert \big( \vert k_1 \vert \vert k_1^* \vert \big)^{2\beta} \langle k_2^* \rangle^{2\beta-M} \, d\sigma dk_2 .
\end{split}
\end{align}

Let
\begin{align*}
  I(k_1,k_2):=  \int_{\S^2} \frac{\vert k_1^* \vert^{2\beta}}{\langle k_2^* \rangle^{M-2\beta}} d\sigma.
\end{align*}
Denoting
\begin{align*}
    E:= \vert k_1 \vert^2 + \vert k_2 \vert^2 , \ \ \ V:= \frac{k_1+k_2}{2},\ \ \ u:= k_1-k_2  ,
\end{align*}
we find
\begin{align*}
    \vert k_1^* \vert^2 &= \frac{E}{2} - \vert u \vert \vert V \vert (\widehat{V} \cdot \sigma) \\
    \langle k_2^* \rangle^2 &= 1+  \frac{E}{2} + \vert u \vert \vert V \vert (\widehat{V} \cdot \sigma).
\end{align*}
Hence
\begin{align*}
    I(k_1,k_2) &= \int_{-1}^{1}  \frac{\big(\frac{E}{2} - \vert u \vert \vert V \vert z \big)^\beta}{\big(1 + \frac{E}{2} + \vert u \vert \vert V \vert z \big)^{M/2-\beta}} dz \geq \frac{1}{\vert u \vert \vert V \vert} \int_{-\vert u \vert \vert V \vert}^0 \frac{\big(\frac{E}{2} -  z \big)^\beta}{\big(1 + \frac{E}{2} +  z \big)^{M/2-\beta}} \, dz \\
    & \geq \frac{E^{\beta}}{2^\beta \vert u \vert \vert V \vert} \frac{1}{M/2-\beta-1}
    \bigg( \big( 1 + \frac{E}{2} - \vert u \vert \vert V \vert \big)^{\beta-M/2+1} - \big(1 + \frac{E}{2} \big)^{\beta-M/2+1} \bigg). 
\end{align*}
Next write that since $\vert k_2 \vert \leq 2,$
\begin{align*}
    \frac{E}{2} - \vert u \vert \vert V \vert = \frac{(k_1 \cdot k_2)^2}{\frac{E}{2} + \vert u \vert \vert V \vert} \leq 2 \big( \frac{k_1}{\vert k_1 \vert} \cdot k_2 \big)^2 \leq 8.
\end{align*}
As a result using $\beta - M/2+1 < 0,$ we find the lower bound 
\begin{align*}
    I(k_1,k_2) \gtrsim \frac{\vert k_1 \vert^{2\beta}}{\vert u \vert \vert V \vert} \bigg( 9^{\beta - M/2+1} - \big(1 + \frac{E}{2} \big)^{\beta-M/2+1} \bigg) .
\end{align*}
Taking $\vert k_1 \vert$ large enough, we find the lower bound 
\begin{align*}
     I(k_1,k_2) \gtrsim \vert k_1 \vert^{2 \beta-2}.
\end{align*}
Plugging this lower bound into \eqref{fTlowerbound} we find 
\begin{align*}
    f(T) \gtrsim  T \langle k_1 \rangle ^{-M} \cdot \vert k_1 \vert^{2\beta-2} \cdot \vert k_1 \vert^{1+2\beta}.
\end{align*}
Multiplying by $\langle k_1 \rangle^M$ and taking the limit $\vert k_1 \vert \to +\infty$ we find that for all $T>0,$
\begin{align*}
    \Vert \l k_1 \r^M f(T) \Vert_{L^{\infty}_{k_1}} = +\infty.
\end{align*}
We arrive at a contradiction.

\end{proof}

\color{black}

\end{document}